\newtheorem{thm}{Theorem}[section]
\newtheorem{cor}[thm]{Corollary}
\newtheorem{lem}[thm]{Lemma}
\newtheorem{prop}[thm]{Proposition}
\newtheorem{cla}[thm]{Claim}
\newtheorem{defn}[thm]{Definition}
\newtheorem{rem}[thm]{Remark}
\newtheorem{que}{Question}
\newcommand{\modified}[1]{#1~} 
\newcommand{\deleted}[1]{}
\newcommand{\comment}[1]{}
\newcommand{\new}[1]{{\bf #1 }}
\newcommand\diam{{\operatorname{diam}}}
\newcommand\Diff{{\operatorname{Diff}}}
\newcommand\eps{\epsilon}
\newcommand\erg{{\operatorname{erg}}}
\newcommand\FF{{\mathcal F}}
\newcommand\loc{{\operatorname{loc}}}
\newcommand\Prob{{\operatorname{Prob}}}
\newcommand\RR{{\mathbb R}}
\renewcommand\top{{\operatorname{top}}}
\newcommand\TT{{\mathbb T}}
\newcommand\ZZ{{\mathbb Z}}
\begin{document}

\title[Entropic Stability] {Entropic Stability Beyond Partial Hyperbolicity}
\author{J\'er\^ome Buzzi and Todd Fisher}
\address{C.N.R.S. \& D\'epartement de Math\'ematiques, Universit\'e Paris-Sud, 91405 Orsay, France}
\address{Department of Mathematics, Brigham Young University, Provo, UT 84602}
\email{jerome.buzzi@math.u-psud.fr}
\email{tfisher@math.byu.edu}
\thanks{}

\subjclass[2000]{37C40, 37A35, 37C15}
\date{March 23, 2009}
\keywords{Measures of maximal entropy, topological entropy, robust ergodicity, ergodic theory, dominated splitting}
\commby{}

\begin{abstract}
We analyze a class of deformations of Anosov diffeomorphisms: these
$C^0$-small, but $C^1$-macroscopic deformations break the topological conjugacy class but leave the high entropy dynamics unchanged. More precisely, there is a partial conjugacy between the deformation and the original Anosov system that identifies all invariant probability measures with entropy close to the maximum. We also establish expansiveness around those measures.

This class of deformations contains many of the known nonhyperbolic robustly transitive diffeomorphisms. In particular, we show that it includes a class of nonpartially hyperbolic, robustly transitive diffeomorphisms described by Bonatti and Viana.
 \end{abstract}

\maketitle

\section{Introduction}

Observing that a physical system is only  known up to some finite precision, Andronov and Pontriaguyn \cite{Andronov} suggested in 1937 that the study of dynamical systems should focus on stable systems, i.e., those that do not change under small perturbations. Rather strikingly, it has turned out that the topologically $C^1$-stable dynamics (also called structurally stable systems) can be analyzed;
indeed, they are exactly the uniformly hyperbolic systems~\cite{Mane88} that satisfy an additional technical assumption. However, the structurally stable diffeomorphisms are not dense 
in the $C^1$-topology and therefore the study of such systems is insufficient even up to an arbitrarily small perturbation. 
Hence, much of the focus of current research in dynamical systems is to extend
our understanding beyond uniform hyperbolicity, in the hope of eventually obtaining 
a global theory of ``most" systems (see for example \cite{Palis}).

A natural approach is to consider weaker forms of stability. First, one should 
design a stability property that holds for interesting examples outside of uniform hyperbolicity. 
Second, one should establish large sets of stable dynamics by finding robust
global phenomena enforcing this property. Third, if this stability fails to hold densely,
one should look for  robust local mechanisms responsible for its failure.
\footnote{This dichotomy of phenomena and mechanisms has been put forward by Pujals
\cite{Pujals} and is related to Palis conjectures for a global picture of dynamics \cite{Palis}.}

The obvious candidate for such a notion is $C^r$-stability with $r>1$, but it turns out to be very difficult to study because of a lack of perturbation lemmas. We keep the $C^1$-topology but replace topological conjugacy by a looser, entropy-based notion and realize the first step of the above program.

\subsection{Stability of the large entropy measures for Bonatti-Viana diffeomorphisms}

We will show that many constructions, including the Bonatti-Viana diffeomorphisms which we will describe below, have strong stability properties, though they are not hyperbolic and therefore not structurally stable.

We first review some standard definitions.  Let $\Diff^1(M)$ denote the space of $C^1$-diffeomorphisms of a compact manifold $M$ endowed with some Riemannian structure. $\Diff^1(M)$ is endowed with its usual topology and distance $d_{C^1}(f,g)$ 
(see \cite[Section 8.1.1]{RobinsonBook}).
A map $f:M\rightarrow M$  is {\it transitive} if there exists some $x\in M$ whose forward orbit is dense in $M$.  A diffeomorphism $f:M\rightarrow M$ is  {\it $C^1$-robustly transitive} if there exists a neighborhood $\mathcal{U}$ of $f$ in $\mathrm{Diff}^1(M)$ such that each $g\in\mathcal{U}$ is transitive.
A {\it Borel isomorphism} of $\psi:X\to Y$ is a bijection such that $\psi$ and $\psi^{-1}$ are both Borel maps. Let $\Prob(f)$ be the set of invariant Borel probability
measures of $f$. 

The topological entropy of a system $(X,f)$, denoted $h_{\mathrm{top}}(f)$, is a number that measures the topological complexity of the system.  On the other hand, if $\mu\in\Prob(f)$, then the measure theoretic entropy, denoted $h_{\mu}(f)$, of a dynamical system is a number that measures the complexity of the system as seen by the measure $\mu$.   (See~\cite[Sections 3.1 and 4.3]{KH}  for precise definitions.)
The variational principle  states that if $f$ is a continuous self-map of a compact metrizable space, then $h_{\mathrm{top}}(f)=\sup_{\mu\in\Prob(f)}h_{\mu}(f)$, see for instance~\cite[p. 181]{KH}.  A measure $\mu\in\Prob(f)$ such that $h_{\mathrm{top}}(f)=h_{\mu}(f)$ is a {\it measure of maximal entropy}.  If there is a unique measure of maximal entropy, then $f$ is called {\it intrinsically ergodic}.

\begin{defn}\label{def:h-stable}
A diffeomorphism $f:M\to M$ is \new{entropically $C^r$-stable} if
for every $g\in \Diff^r(M)$ that is $C^r$-close to $f$ there exists
a  Borel isomorphism $\psi:M'\to M''$ where $M', M''$ are Borel subsets of $M$ such that the following properties hold:
 \begin{itemize}
   \item $\psi\circ g = f\circ \psi$ on $M'$,
   \item $\tilde h(f,M\setminus M'') := \sup \{ h(f,\nu) :
\mu\in\Prob(f|_{M\setminus M''})\}< h_\top(f)$, and
   \item $\tilde h(g,M\setminus M') < h_\top(g)$.
  \end{itemize}
\end{defn}

It is convenient to call \emph{large entropy measures,} the \emph{ergodic} invariant probability measures with entropy greater than some constant $h$ (strictly less than the topological entropy). We denote their set by $\Prob_\erg^h(f)$.  Hence,  a system is entropically $C^r$-stable if its large entropy measures ``stay the same" for any sufficiently close diffeomorphism.  

We shall establish that large entropy measures also maintain the following property, enjoyed by the Anosov system (for which we can take $X_1=X$):

\begin{defn}
Let $f:X\to X$ be a Borel isomorphism of a metric space $X$ and $\mathcal M$ be a set of invariant probability measures of $f$.
We say that $\mathcal M$ is {\bf almost expansive} if there exists a number $\eps_0>0$  such that, for every $\mu\in\mathcal M$, for $\mu$-a.e. $x\in M$, for all $y\in M$ $\sup_{n\in\ZZ} d(f^nx,f^ny)\geq \eps_0$ implies $x=y$.
\end{defn}

What can we say about the low entropy measures? On the one hand,  a recent result of Hochman \cite{Hochman} shows that entropy-conjugacy with an Anosov system implies Borel conjugacy to this Anosov system up to subsets of zero measure with respect to any aperiodic invariant  probability measure. This gives the corresponding strengthening of the stability property.  On the other hand,  expansivity can fail dramatically around low entropy measures. The theory of symbolic extension and entropy structures of Boyle and Downarowicz~\cite{BD, Dow} allows to precisely define such phenomena. 

Recall that a symbolic system is defined as the left shift $(x_n)_{n\in\ZZ}\mapsto
(x_{n+1})_{n\in\ZZ}$ acting on a shift invariant, compact subset of $\mathbb N^{\mathbb Z}$
(not necessarily of finite type).
Given a homeomorphism $f$ of a compact metrizable space $X$  a 
{\it symbolic extension} of $(X,f)$  is a continuous surjection 
$\varphi\colon  \Sigma\to X$ such that $f\circ\varphi=\varphi\circ \sigma$ 
and $(\Sigma,\sigma)$ is a symbolic system.    Boyle and Downarowicz~\cite{BD} have shown that for a given dynamical system there is a natural connection between how entropy arises on finer and finer scales "around" an invariant probability measure and the existence of such symbolic extensions that approximate, closely in entropy, the system equipped with that measure.  We shall see that, for some examples of entropically stable systems considered in this paper,  the dynamics at low entropy does not only fail to be expansive but prevents the existence of any symbolic extension.

We will  study diffeomorphisms $f:M\to M$ of a compact manifold $M$ with a weak form of hyperbolicity called a dominated splitting.  
A $Df$-invariant splitting of the tangent bundle of some invariant set $\Lambda$
$$
T_\Lambda M=E_1\oplus\cdots\oplus E_k
$$
is \emph{dominated} if each bundle has constant dimension (at least two of them non-zero) and there exists an integer $\ell\geq1$ with the following property. For every $x\in\Lambda$, all $i=1,\dots,(k-1)$,  and every pair of unitary vectors $u\in E_1(x)\oplus\dots\oplus E_i$ and $v\in E_{i+1}(x)\oplus\dots\oplus E_k(x)$,  it holds that
$$
 \frac{|Df_x^\ell (u)|}{|Df^{\ell}_x (v)|}\leq \frac{1}{2}.
$$
(See for example~\cite[Appendix B, Section 1]{BUH} for properties of systems with a dominated splitting.)  A diffeomorphism $f\in\mathrm{Diff}^1(M)$ is {\it partially hyperbolic} if there exists a dominated splitting $TM=E^s \oplus E^c\oplus E^u$ where $E^s$ is uniformly contracting, $E^u$ is uniformly expanding (at least one of $E^s$ and $E^u$ is non-trivial).  The diffeomorphism $f$ is {\it strongly partially hyperbolic} if $E^s$ and $E^u$ are both non-trivial.  A strongly partially hyperbolic diffeomorphism is {\it hyperbolic} (or Anosov) if $TM=E^s\oplus E^u$.  The (stable) index is the dimension $\dim E^s$.

The first goal of the present paper is to analyze  a class diffeomorphisms described by Bonatti and Viana~\cite{BV}.   This was the first example of a robustly transitive diffeomorphism that is not partially hyperbolic.  

\begin{thm}\label{thm:applyToBV}
There exist a $C^\infty$-diffeomorphism, $f$, of the $4$-torus, $\TT^4$, and an open set $\mathcal{U}\subset \mathrm{Diff}^1(M)$ containing $f$ such that each $g\in \mathcal{U}$  is $C^1$-robustly transitive, not partially hyperbolic and are all entropy-conjugate to the same  Anosov diffeomorphism.

Moreover, for each $g\in\mathcal U$, the large entropy measures are almost expansive. Nevertheless, there exist a non-empty open set $\mathcal{V}\subset \mathcal{U}$ and a $C^1$-residual set $\mathcal{D}\subset \mathcal{V}$ such that each $g\in \mathcal{D}$ has no symbolic extension.
\end{thm}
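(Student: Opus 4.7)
\medskip
\noindent\textbf{Proof proposal.} The plan is to realize the Bonatti--Viana construction as an instance of the general entropic stability framework of the paper, and then separately produce the failure of symbolic extensions via a perturbation argument exploiting homoclinic tangencies available in the non-hyperbolic region.

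First I would recall the Bonatti--Viana construction. Start with a linear Anosov automorphism $A$ of $\mathbb{T}^4$ whose stable and unstable subbundles are each two-dimensional, and fix two small disjoint balls $B^s$ and $B^u$ centered at fixed points $p^s$, $p^u$ of $A$. Define $f$ by modifying $A$ inside $B^s$ so that the two contracting eigendirections of $DA|_{E^s}$ are mixed by a small rotation until one of them becomes mildly expanding at $p^s$, and symmetrically modify $A$ inside $B^u$ at $p^u$. Choosing the rotations small enough to preserve a dominated splitting $TM = E^{cs}\oplus E^{cu}$ of index two, but large enough to destroy partial hyperbolicity, produces the Bonatti--Viana map $f$. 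Robust transitivity in a $C^1$-neighborhood $\mathcal U$ of $f$ is the content of \cite{BV}, while the failure of partial hyperbolicity for every $g \in \mathcal U$ is witnessed persistently by the mixed-index periodic points at $p^s$ and $p^u$.

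The next step is to feed this example into the general entropic stability theorem of the paper. Two ingredients are required. First, every $g \in \mathcal U$ remains $C^0$-close to $A$, so a classical shadowing argument provides a semi-conjugacy $\pi \colon M \to M$ with $A \circ \pi = \pi \circ g$. Second, the set of orbits that enter $B^s \cup B^u$ with positive asymptotic frequency carries topological entropy strictly smaller than $h_\top(A)$: Ruelle's inequality combined with the bounded Lyapunov behavior afforded by the dominated splitting controls the entropy of invariant measures supported in the perturbation region, while outside this region $f$ coincides with $A$. Combining these facts with the general theorem gives the Borel isomorphism $\psi$ of Definition~\ref{def:h-stable}; almost expansiveness of the large entropy measures then follows because $\psi$ identifies them with measures of the expansive map $A$.

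For the last assertion, I would locate inside $\mathcal U$ an open set $\mathcal V$ of diffeomorphisms admitting a periodic saddle inside the former perturbation region that exhibits a $C^1$-persistent homoclinic tangency in a two-dimensional center-like direction; such tangencies can be produced by a standard $C^1$-perturbation because the splitting inside $B^s \cup B^u$ is no longer hyperbolic. Inside such $\mathcal V$, the Downarowicz--Newhouse criterion, extended to higher dimensions in the presence of a non-hyperbolic dominated splitting by D\'\i az and Fisher, yields a $C^1$-residual subset $\mathcal D \subset \mathcal V$ for which no symbolic extension exists.

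\textbf{Main obstacle.} The technical heart of the argument is the entropy estimate: one must show, uniformly over $g \in \mathcal U$, that orbits spending positive frequency inside the deformation region carry entropy strictly below $h_\top(A)$. This is the place where the $C^0$-smallness of the deformation becomes essential, and it requires carefully quantifying the measure-theoretic size of the deformation regions against the Lyapunov behavior provided by the surviving dominated splitting. A subsidiary obstacle is ensuring that the creation of persistent tangencies needed for the symbolic-extension part can be performed inside $B^s\cup B^u$ without leaving the open set $\mathcal U$.
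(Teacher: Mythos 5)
Your overall strategy---realize the Bonatti--Viana example as an instance of the abstract entropic-stability theorem and then graft on a Downarowicz--Newhouse/D\'{\i}az--Fisher tangency argument for the symbolic-extension part---is the same as the paper's, but three steps in your proposal either fail or skip the actual technical content. First, mixed-index periodic points do \emph{not} witness the failure of partial hyperbolicity: they only rule out the map being Anosov (the examples of Theorem~\ref{t.bsfv} are strongly partially hyperbolic yet have periodic points of different indices). To kill partial hyperbolicity one must forbid any further invariant splitting of $E^{cs}$ into $E^s\oplus E^c$; the paper does this by a second perturbation near the bifurcated point $q_2$ making the two contracting eigenvalues non-real complex conjugates, so that no one-dimensional invariant sub-bundle of $E^{cs}$ can exist. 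Your proposal omits this mechanism entirely. Second, you write that combining shadowing with ``the general theorem'' gives the conclusion, but the general theorem (Theorem~\ref{thm:main}) requires the quantitative hypotheses (H1)--(H3): the $(\eps,N)$-sparse support, the $(\alpha,\rho,\Lambda)$-respect of the domination at the explicit scale $\rho=(\eps^{1/2}-2\eps)$, and $\gamma$-near hyperbolicity. Verifying these is the actual content of the paper's Section~\ref{s.BV}; in particular, after the deformation the invariant unstable cone becomes very wide, and the paper must conjugate by the diagonal matrix $L=\mathrm{diag}(\alpha^2,\alpha^2,1,1)$ inside the deformation ball to restore invariance of the thin cone $C^u_\alpha$. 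Your entropy estimate via Ruelle's inequality addresses a step that belongs to the proof of the abstract theorem (and is handled there by the Bowen-ball counting of Lemma~\ref{lem:deconcentrate-f} plus Ledrappier--Walters), not the step that needs doing here.

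Third, almost expansiveness of the large entropy measures does not follow from the Borel isomorphism $\psi$ identifying them with measures of the expansive map $A$: a measurable bijection carries no metric information. The correct argument (from the proof of Theorem~\ref{thm:main}) is that $\sup_{n}d(g^nx,g^ny)<K_*\eps$ forces $\pi(x)=\pi(y)$ by uniqueness of shadowing, and then the fiber-collapsing argument (Birkhoff averaging of the expansion factor $m$, using non-concentration on the deformation balls) shows $x=y$ for $\nu$-a.e.\ $x$. Finally, on the symbolic-extension part your idea is essentially the paper's, but note that the paper performs the tangency construction at a \emph{separate} fixed point $r$ (taking $N=3$ in the sparse-deformation hypothesis) after first flattening the map to the identity on the center-stable leaf near $r$; attempting it inside the already-deformed balls, as you suggest, risks destroying the domination-respecting property you need to keep $\mathcal V\subset\mathcal U$.
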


\begin{cor}
All diffeomorphisms $g\in\mathcal U$ above are not partially hyperbolic, are not structurally stable or $\Omega$-stable but are $C^1$-entropically stable. 
\end{cor}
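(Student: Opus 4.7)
The plan is to combine Theorem~\ref{thm:applyToBV} with classical characterizations of $C^1$-structural and $C^1$-$\Omega$-stability. Non-partial-hyperbolicity is part of the conclusion of Theorem~\ref{thm:applyToBV}, so the first assertion needs no additional work. For the failure of $C^1$-structural and $C^1$-$\Omega$-stability, the idea is to show that no $g\in\mathcal U$ can be Axiom A. Since $g$ is transitive, some forward orbit is dense and its base point lies in $\Omega(g)$; invariance and closedness then force $\Omega(g)=M$. If $g$ were Axiom A, the uniform hyperbolic splitting on $\Omega(g)=M$ would make $g$ Anosov and in particular partially hyperbolic, contradicting Theorem~\ref{thm:applyToBV}. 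By Ma\~n\'e's $C^1$ structural stability theorem \cite{Mane88} and Palis's $C^1$ $\Omega$-stability theorem---both of which require Axiom A---$g$ is therefore neither $C^1$-structurally stable nor $C^1$-$\Omega$-stable.

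For the entropic $C^1$-stability of each $f\in\mathcal U$, the plan is to exploit transitivity of entropy-conjugacy. Openness of $\mathcal U$ ensures that every $g'$ sufficiently $C^1$-close to $f$ again lies in $\mathcal U$, so by Theorem~\ref{thm:applyToBV} both $f$ and $g'$ are entropy-conjugate to the same Anosov diffeomorphism $A$ of $\TT^4$. Writing the Borel isomorphisms supplied by Definition~\ref{def:h-stable} as $\psi_f\colon M_f'\to N_f$ and $\psi_{g'}\colon M_{g'}'\to N_{g'}$, the composition $\psi_{g'}^{-1}\circ\psi_f$ restricted to $\psi_f^{-1}(N_f\cap N_{g'})$ is the candidate Borel isomorphism between subsets of $M$ intertwining $f$ and $g'$.

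The only nontrivial step is the bookkeeping verification that the additional subsets of $M$ discarded when forming this composition remain entropy-negligible. Each such piece is either already negligible for $f$ or $g'$ individually (coming from the original conjugacies) or arises as the $\psi_f$- or $\psi_{g'}$-preimage of a set that is entropy-negligible for $A$; since Borel conjugacies preserve measure-theoretic entropy, the entropy-gap conditions built into the original conjugacies transfer to the composition, and $f$ is $C^1$-entropically stable. This transitivity-of-conjugacy bookkeeping is the main (and essentially only) obstacle, and no new dynamical input is required beyond Theorem~\ref{thm:applyToBV}.
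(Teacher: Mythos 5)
Your proposal is correct and follows essentially the route the paper intends: the corollary is stated without explicit proof, resting on Theorem~\ref{thm:applyToBV}, Remark~\ref{r.conjugate} (entropy-conjugacy of every nearby diffeomorphism to a common Anosov system gives entropic stability by composing the two partial conjugacies), and the Ma\~n\'e--Palis characterizations of $C^1$-structural and $\Omega$-stability via Axiom~A, exactly as you argue. The one point worth making explicit in your bookkeeping step is that entropy-conjugacy already forces $h_\top(f)=h_\top(g')=h_\top(A)$, which is what allows the entropy-gap conditions on the discarded sets to transfer through the composition.
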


In particular, this gives a nonempty open set of nonpartially hyperbolic diffeomorphisms
with constant topological entropy and unique measures of maximal entropy that define pairwise isomorphic measure-preserving transformations.  We derive this from an abstract result (Theorem \ref{thm:main}).

\begin{rem}
The diffeomorphisms $g\in\mathcal D$ defined above are far from entropy-expansive (i.e., they do not satisfy $h_\loc(f,\eps)=0$ for any $\eps>0$). Indeed, asymptotic entropy-expansivity, i.e.,  $\lim_{\eps\to0} h_\loc(g,\eps)=0$ would imply the existence of a symbolic extension with nice properties (see Sec. \ref{sec:non-concentration} for definitions). Nevertheless, these diffeomorphisms satisfy:
 $$h_\top(g)=h_\top(g,\eps_0)$$
for $\eps_0>0$ the implicit constant in the definition of almost expansivity (which can even be chosen independently of $g\in\mathcal V$).
\end{rem}

\subsection{Previous Results}\label{sec:previous}

Newhouse and Young \cite{NY83} proved that a class of partially hyperbolic, nonhyperbolic robustly transitive diffeomorphisms\footnote{This class was described originally by Shub~\cite{shub}.} that are $C^0$ deformations of Anosov diffeomorphisms are $C^1$-entropically stable.   More precisely, they  showed that these diffeomorphisms have a unique measure of maximal entropy that is isomorphic to the measure of maximal entropy for the Anosov system but this can be strengthened to the entropic stability defined above.

Together with M. Sambarino and V. Vasquez the authors of the present work proved the following related result:

\begin{thm}~\cite{bfsv}~\label{t.bsfv} For any $d\geq 3$, there exists a nonempty open set $\mathcal{U}$
in $\mathrm{Diff}(\mathbb T^d)$ satisfying:
 \begin{itemize}
   \item each $f\in \mathcal{U}$ is strongly partially hyperbolic, robustly transitive, and 
   $C^1$-entropically stable;
in particular the topological entropy is locally constant at $f$;
  \item each $f\in \mathcal{U}$ has equidistributed periodic points; and
 \item no $f\in \mathcal{U}$ is Anosov or structurally stable.
 \end{itemize}
\end{thm}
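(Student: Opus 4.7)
The plan is to construct $\mathcal{U}$ via a derived-from-Anosov (DA) surgery in the style of Ma\~{n}\'{e} and Bonatti--D\'{\i}az, and then to feed the construction into the abstract Theorem~\ref{thm:main} to upgrade ``$C^0$-perturbation of Anosov'' to ``entropic $C^1$-stability.'' Concretely, start from a linear Anosov automorphism $A:\TT^d\to\TT^d$ whose hyperbolic splitting refines as $E^s_A\oplus E^{wu}_A\oplus E^{su}_A$ with $\dim E^{wu}_A=1$ and $\|A|E^{wu}_A\|$ only weakly expanding; such $A$ exist for every $d\geq 3$, e.g.\ by picking $A\in SL(d,\ZZ)$ with an appropriate spectrum. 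Fix a fixed point $p$ and perform a $C^0$-small DA surgery in a small ball around $p$ that reverses the expansion along $E^{wu}_A$ at $p$ while preserving the cone fields realizing the domination. Call the resulting $C^\infty$ diffeomorphism $f_0$. Since cone conditions and domination are $C^1$-open, every $g$ in a $C^1$-neighborhood $\mathcal{U}$ of $f_0$ admits a dominated splitting $T\TT^d=E^s\oplus E^c\oplus E^u$ with $E^s$ uniformly contracting, $E^u$ uniformly expanding and $\dim E^c=1$: each such $g$ is strongly partially hyperbolic, and the surgery produces a periodic point whose stable index differs from $\dim E^s_A$, so no $g\in\mathcal{U}$ is Anosov (nor, once transitivity is known, $\Omega$-stable).

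The robust transitivity of $\mathcal{U}$ is secured via robust minimality of the strong stable and strong unstable foliations, arranged by a blender-type refinement of the DA surgery in the spirit of Bonatti--D\'{\i}az--Pujals--Ures. Franks' classical semiconjugacy theorem, applicable since $g\in\mathcal{U}$ is $C^0$-close and homotopic to $A$, furnishes a continuous surjection $\pi_g:\TT^d\to\TT^d$ with $\pi_g\circ g=A\circ\pi_g$, and the dominated structure forces the nontrivial fibers of $\pi_g$ to be arcs tangent to the central bundle $E^c$. Taking $M'\subset\TT^d$ to be the set of points with trivial $\pi_g$-fiber and $M''=\pi_g(M')$, the restriction $\pi_g:M'\to M''$ is a Borel isomorphism conjugating $g|_{M'}$ to $A|_{M''}$, and Theorem~\ref{thm:main} then delivers entropic $C^1$-stability, local constancy of $h_\top$, uniqueness of the measure of maximal entropy $\mu_g$, and a measure-isomorphism of $(\TT^d,g,\mu_g)$ with $(\TT^d,A,\mu_A)$. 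Equidistribution of $g$-periodic orbits to $\mu_g$ is then deduced by pushing down the Bowen-style equidistribution for $A$: the correspondence between periodic orbits is bijective outside the surgery region, and the number of periodic orbits trapped in the collapsing fibers grows at a rate strictly less than $h_\top(g)$.

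The main obstacle is the fiber-entropy estimate underlying Theorem~\ref{thm:main}: one must show $\tilde h(g,\TT^d\setminus M')<h_\top(g)$ and $\tilde h(A,\TT^d\setminus M'')<h_\top(A)$, uniformly in $g\in\mathcal{U}$. This reduces to checking that any invariant probability measure supported on the union of nontrivial $\pi_g$-fibers has entropy strictly below $h_\top(A)$. Those fibers are one-dimensional and tangent to $E^c$, so Ruelle's inequality bounds their entropy by the positive part of the central Lyapunov exponent, which is uniformly dominated by the exponent along $E^u$ by the dominated splitting; combined with the Pesin--Ruelle formula applied to $A$, this yields the required strict gap. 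Making this gap quantitative uniformly over $\mathcal{U}$ and coupling it with almost expansiveness of large-entropy measures for $g$ is the technical heart of the proof.
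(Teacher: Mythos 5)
First, a point of comparison: the paper does not actually prove this statement --- it is imported from \cite{bfsv}, and the authors only remark that Theorem~\ref{thm:main} yields an alternative proof of the \emph{first} bullet point. Measured against that, your architecture is the right one: a $C^0$-small but $C^1$-large DA-type surgery at a fixed point of a linear Anosov automorphism with a weak unstable direction, the Franks semiconjugacy $\pi_g$, fibers forced into the central bundle, and an appeal to the abstract theorem. You also correctly identify the fiber-entropy estimate $\tilde h(g,\TT^d\setminus M')<h_\top(g)$ as the technical heart.

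The mechanism you propose for that estimate, however, does not work. You claim that an invariant measure giving full mass to the union of nontrivial $\pi_g$-fibers has entropy controlled by the central Lyapunov exponent via Ruelle's inequality. Ruelle's inequality bounds $h_\nu(g)$ by the sum of \emph{all} positive exponents of $\nu$, including the uniformly large ones along $E^u$, so it gives no useful bound; the fact that each individual fiber is an arc tangent to $E^c$ only controls the fiber entropy $h_\top(g,\pi_g^{-1}(x))$, not the entropy of a measure carried by the \emph{union} of such fibers, which is an invariant set containing plenty of unstable dynamics. (Ledrappier--Walters, which is what the central tangency of the fibers actually feeds, shows that entropy does not \emph{drop} by more than $d\gamma$ under $\pi_g$; it says nothing about measures charging the collapsing set having \emph{small} entropy.) The paper's route through Theorem~\ref{thm:main} is genuinely different: (i) asymptotic $h$-expansiveness of the Anosov map gives the non-concentration Lemma~\ref{lem:deconcentrate-f}, so every large-entropy measure of the Anosov system gives mass less than $\eta$ to the union of the $N$ surgery balls; (ii) Ledrappier--Walters plus $\gamma$-near hyperbolicity show that large-entropy measures of $g$ project to large-entropy measures of the Anosov map, hence their orbits spend a time-fraction less than $\eta$ in the surgery balls; (iii) outside those balls central-unstable displacements are expanded by $\Lambda>1$ while inside they are contracted by at most $e^{-\gamma}$, so a Birkhoff average forces the fiber points $y^u$ to coincide with $x$ almost everywhere. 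Without some version of (i)--(iii), your set $M'$ has no reason to carry every large-entropy measure. Finally, the second and third bullets (equidistribution of periodic points; failure of structural stability) are not consequences of Theorem~\ref{thm:main} at all --- the paper defers them entirely to \cite{bfsv} --- and your one-sentence treatment of periodic-orbit equidistribution would itself require a quantitative version of the same non-concentration input.
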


We note that the abstract result below (Theorem \ref{thm:main}) gives another proof  of the first point of the above theorem.

\medbreak

Using different techniques, F.~Rodriguez Hertz, J.~Rodriguez Hertz, Tahzibi, and Ures~\cite{RHRHTU} have obtained a rather precise description of the following systems:

\begin{thm}\cite{RHRHTU}
Consider an accessible partially hyperbolic diffeomorphism of a 3-dimensional manifold having compact center leaves.  Either it has a unique entropy maximizing measure with zero center Lyapunov exponent,
or it has a positive, even number of entropy maximizing ergodic, invariant measures, all of them with nonzero center Lyapunov exponent.
\end{thm}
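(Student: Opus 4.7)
The plan is to organize the ergodic entropy-maximizing measures by the sign of their center Lyapunov exponent, which is a single real number since $\dim E^c=1$. For each ergodic MME $\mu$ let $\lambda^c(\mu)$ denote the (unique) Lyapunov exponent of $Df$ along $E^c$ with respect to $\mu$, and dichotomize the argument on whether $\lambda^c(\mu)$ vanishes.

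To handle $\lambda^c(\mu)=0$, I would invoke a Ledrappier--Young type entropy formula. Since compact center leaves are circles, the conditional entropy along the center foliation vanishes, and the full entropy is carried by the unstable direction: $h_\mu(f)=\int\log|\det Df|_{E^u}|\,d\mu$. Accessibility then supplies rigidity: any set of positive measure saturated under the stable/unstable holonomy pseudogroup is the whole manifold, and combined with absolute continuity of both holonomies this forces any two zero-center-exponent ergodic MMEs to coincide. The same entropy formula shows that coexistence of a zero-exponent ergodic MME with a hyperbolic ergodic MME would saturate the variational inequality from both sides with incompatible Jacobians, so a zero-exponent MME, if it exists, must be the \emph{unique} MME of the system.

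For $\lambda^c(\mu)\neq 0$, Pesin theory renders $\mu$ nonuniformly hyperbolic with the extended bundle $E^c\oplus E^u$ (respectively $E^s\oplus E^c$) playing the role of an unstable (respectively stable) direction according to the sign. I would characterize such MMEs as u-Gibbs (respectively s-Gibbs) states with respect to the extended lamination, with conditionals equivalent to Lebesgue along the two-dimensional leaves. Then I would construct a sign-reversing, fixed-point-free involution $\mu\mapsto\mu^*$ between positive- and negative-exponent ergodic MMEs by transporting these leafwise conditionals through center holonomy, exploiting accessibility to identify the s-state naturally paired to a given u-state and vice versa. Combined with the previous paragraph this yields the dichotomy: either a unique MME with $\lambda^c=0$, or ergodic MMEs that are all hyperbolic and come in opposite-sign pairs, hence in positive even number.

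The main obstacle is constructing and verifying the involution $\mu\mapsto\mu^*$: one must show the construction yields a genuine invariant probability of maximal entropy, depends only on $\mu$ and not on the accessibility paths or holonomy charts used, is truly involutive, and, most delicately, is \emph{fixed-point free}. Excluding a self-paired hyperbolic ergodic MME is precisely what upgrades ``positive number'' to ``positive even number'' and is the technical crux of the theorem; it is here that accessibility, compactness of center leaves, and the one-dimensionality of $E^c$ must all be combined to prevent a hyperbolic MME from being symmetric across the center direction.
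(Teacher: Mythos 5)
This statement is not proved in the paper at all: it is quoted verbatim from Rodriguez Hertz--Rodriguez Hertz--Tahzibi--Ures \cite{RHRHTU} in the ``Previous Results'' section, and the authors explicitly defer to that reference even for the definitions. So there is no in-paper argument to compare yours against; I can only assess your sketch on its own terms against the known strategy of \cite{RHRHTU}.

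Your global dichotomy (zero center exponent versus nonzero, with the nonzero measures pairing off by sign) matches the shape of the actual theorem, but two of your load-bearing steps are wrong as stated. First, for a zero-center-exponent MME you assert the Pesin-type equality $h_\mu(f)=\int\log|\det Df|_{E^u}|\,d\mu$; that equality characterizes SRB/Gibbs $u$-states, not measures of maximal entropy, and an MME has no reason to satisfy it. The correct mechanism is entirely different: one projects along the compact center foliation to a quotient hyperbolic system, uses the Ledrappier--Walters relativized variational principle (circle fibers carry no entropy) to see that every MME lies over the unique MME downstairs, and then applies the invariance principle for zero center exponents (holonomy-invariance of the disintegration along center circles) together with accessibility to get uniqueness. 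Second, your characterization of hyperbolic ergodic MMEs as $u$-Gibbs (or $s$-Gibbs) states with Lebesgue-equivalent conditionals on two-dimensional leaves is again an SRB-type property, not an entropy-maximizing one, so the proposed involution built by ``transporting leafwise conditionals through center holonomy'' has no foundation; you also acknowledge that you cannot show it is well defined, involutive, or fixed-point free, which is exactly where the content lies. In \cite{RHRHTU} the evenness comes instead from the circle structure of the center leaves: in the hyperbolic case the disintegration over the quotient MME is atomic on each center circle, and the atoms supporting negative-exponent (attracting) and positive-exponent (repelling) measures must alternate around the circle, forcing equal, finite, positive counts of each. As written, your argument does not close either half of the dichotomy.
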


We refer to their paper for definitions of the terms above. Notice that this shows that transitivity, entropic stability and non-uniqueness of the entropy maximizing measures can robustly coexist.  Also, it shows that uniqueness of the entropy maximizing measure does not hold generically outside of the uniformly hyperbolic systems, even assuming topological transitivity.

\subsection{Further Questions}
There are a number of natural questions that follow from Theorem~\ref{thm:applyToBV}.  

\subsubsection{A phenomenon for entropic stability}

After this analysis of a large class of examples, we would like to find the phenomena responsible for entropic stability and in particular formulate more general conditions, instead of specifying a perturbative scheme. The following condition might be sufficient:

\begin{defn} \cite{BuzziICMP}
Let $f\in\Diff(M)$.  Define
 $$
    h^{k}(f):=\sup\{h_\top(f,\phi([0,1]^{k})): \phi\in C^\infty(\RR^k,M)\}.
 $$
We say that  a dominated splitting $TM=E^{cs}\oplus E^{cu}$ is \new{entropy-hyperbolic} if the following holds:
 \begin{itemize}
  \item $h^{\dim E^{cu}-1}(f)< h_\top(f)$, and
  \item $h^{\dim E^{cs}-1}(f^{-1}) < h_\top(f)$.
  \end{itemize}
 \end{defn}

\begin{que}
Does the entropy-hyperbolicity of a dominated splitting imply the finiteness of the number of entropy maximizing ergodic invariant measures? Does it implies  entropic-stability?
\end{que}

We think that the answer to the first question above is affirmative, on the basis of partial results assuming stronger versions of the condition. 

\begin{rem}
Entropy-hyperbolicity, as formulated above, seems to be excessively restrictive. For instance, the disjoint union of two Anosov systems with distinct indices cannot satisfy it for trivial reasons.  Also, we  do not know of a system with an entropy-hyperbolic dominated splitting which is not given by an isotopy from an Anosov system. 
\end{rem}

\subsubsection{Entropy-conjugacy to uniform systems}

The stability observed by Newhouse and Young and in this work, follows from a weak form of conjugacy  to a uniformly hyperbolic system: almost conjugacy (topological conjugacy up to negligible sets for the maximal entropy measure of each system) or entropy-conjugacy (see below Def. \ref{def:h-conjugate}). One might think that such a conjugacy is actually the rule rather than the exception. They formulated this idea as follows:

\begin{que}\cite{NY83}\label{q.ny1} For any compact manifold $M$ and $r\geq1$, let $\mathcal{B}(M)$ denote the set of $C^r$ diffeomorphisms such that
\begin{enumerate}
\item $f$ has finitely many ergodic, invariant measures of maximal entropy, and
\item on the support of each such measure, $f$ is almost conjugate to some Axiom A diffeomorphism.
\end{enumerate}
Is $\mathcal{B}(M)$ residual in $\mathrm{Diff}^r(M)$?
\end{que}

One can of course reformulate this question, replacing the above notion of almost conjugacy by entropy-conjugacy (almost conjugacy does not imply and is not implied by entropy-conjugacy).

\subsubsection{Generic stability and finiteness}

\begin{que}\label{q.ny} Does a generic diffeomorphism admit finitely many entropy-maximizing ergodic and invariant measures? Is it entropically stable? 
\end{que}

However, for all we know, entropic stability (like structural stability) could fail to be dense. More precisely,
S. Crovisier~\cite{Cro} suggested that the  diffeomorphisms with homoclinic 
classes robustly without a dominated splitting might provide a (large) set of points of
variation of the topological entropy and thus give a negative answer to Question \ref{q.ny}.

\section{Abstract Result}\label{s.abstract}

In this section,  we state Theorem \ref{thm:main}, from which Theorem~\ref{thm:applyToBV} will be deduced in section \ref{s.BV}. Theorem \ref{thm:main} is our main result. It proves that a large class of deformations of Anosov
diffeomorphisms that are big in the $C^1$ topology, do not modify large entropy measures. We first state the somewhat technical assumptions as three definitions and then the theorem. We conclude this section with an outline of the proof.

For simplicity, we assume that $M=\mathbb T^d$, the $d$-dimensional torus and leave the obvious modifications necessary to deal with the (slightly) more general manifolds carrying Anosov systems to the interested reader. Also, Anosov or Anosov system will mean Anosov diffeomorphism.

\medbreak

The first requirement will ensure that the perturbation is $C^1$-small, except possibly on a union of a given number of well-separated balls of small radius:

\begin{defn}\label{def:sparse}
An \new{$(\eps,N)$-sparse deformation} (or just: $(\eps,N)$-deformation) of $f\in\Diff^1(M)$ is a diffeomorphism $g:M\rightarrow M$
such that there exist $x_1,\dots,x_N\in M$
and $r>0$ satisfying:
 \begin{itemize}
  \item $d_{C^1}(g|_{M\setminus B_r},f|_{M\setminus B_r})<\eps$ where $B_r:=\bigcup_{i=1}^N B(x_i,r)$;
  \item $r<\eps$;
  \item $\min_{i\ne j} d(x_i,x_j)>\eps^{1/2}$.
 \end{itemize}
 $B_r$ is called a \new{strong support} of the deformation and $r$ is called its \new{radius}.
\end{defn}

Before stating the second requirement, we need to recall some facts about cone conditions and hyperbolicity.  The {\it cone} $C^1_\alpha$ of aperture $\alpha>0$ defined by a decomposition $E^1\oplus E^2$ of a Euclidean space $E$ is:
 $$
  C^1_\alpha:=\{ v^1+v^2\in E: v^i\in E^i\text{ and }\|v^2\|\leq\alpha\|v^1\| \}.
 $$
$\dim(E^1)$ is called the dimension of the cone. For a manifold $M$, a {\it cone field} is the specification of a cone  $C(x)$ of fixed dimension in each $T_xM$, $x\in M$.  A boundaryless submanifold $\Sigma$ is {\it tangent to a cone field} $C$ if 
\begin{enumerate}
\item[(i)] the submanifold and the cone have the same (constant)
dimension and 
\item[(ii)] $T_x\Sigma\subset C(x)$ at every $x\in M$.
\end{enumerate}

Let $f$ be Anosov with an adapted Riemannian metric: there exists a $Df$-invariant continuous splitting $TM=E^s\oplus E^u$   with the following bounds:
$$\begin{array}{rlll}
\lambda_0& =\min_{x\in M}\min_{v\in E^u_x\setminus\{0\}}\frac{\| Df v\|}{\| v\|}>1 \textrm{ and }\\
\mu_0& =\max_{x\in M}\max_{v\in E^s_x\setminus\{0\}}\frac{\| Df v\|}{\| v\|}<1.
\end{array}$$
($\lambda_0$ is the minimum expansion for $Df$ in the unstable direction, and $\mu_0$ is the minimum contraction for $Df$ in the stable direction).  The hyperbolicity strength is:
 $$
     \lambda_1=\min\{\lambda_0, \mu_0^{-1}\}>1.
 $$

 Let $C^u_\alpha$ and $C^s_\alpha$ denote the cones defined by the hyperbolic splitting of $f$ associated to $E^s\oplus E^u$ as above,  for an aperture $\alpha>0$ to be determined.

\begin{rem} Observe that the cones above are those defined by $f$, not by $\tilde f$. These will be the only cone fields that we consider. 
\end{rem}


We now formulate our second requirement on the deformations. It keeps the dominated splitting, even inside the strong support.

\begin{defn}A diffeomorphism $g:M\rightarrow M$ 
\new{($\alpha, \rho, \Lambda$)-respects the domination of $f$} if it satisfies the following for all $x\in M$ and all $y,z\in B(x,\rho)$ such that:
 $$
    y-x\in C^u_{\alpha}(x) \text{ and }g(z)-g(x)\in C^s_{\alpha}(g(x))
 $$
then
\begin{enumerate}
  \item $\|g(y)-g(x)\|/\|y-x\|>  \Lambda \|g(z)-g(x)\|/\|x-z\|$
  \item $g(y)-g(x)\in C^u_{\alpha}(g(x))$ and $z-x\in C^s_{\alpha}(x)$ 
 \end{enumerate}
 \end{defn}

 

This assumption of non-linear domination will ensure that large center-unstable disks are mapped by $g$ to similar disks and will be used to build invariant center-unstable foliations (and likewise for center-stable ones). The point of the above definition is to make the scale $\rho>0$ explicit.


Our third (and last) requirement is that even if a vector in the center-unstable direction can be contracted, this contraction is weak  (and analogously in the center-stable direction):

\begin{defn}
For $\gamma>0$ a diffeomorphism $g:M\rightarrow M$  is \new{$\gamma$-nearly hyperbolic} with respect to a dominated splitting
$TM=E^{cu}\oplus E^{cs}$ if for some $C\in(1, \infty)$ and for all $n\geq0$ the following conditions are satisfied:
\begin{enumerate}
\item[(i)] $\|Dg^n v^{cu}\|\geq C^{-1}e^{-\gamma n}$  for all $ v^{cu}\in E^{cu}$ and
\item[(ii)]    
     $ \|Dg^n v^{cs}\|\leq C e^{\gamma n}$ for all  $v^{cs}\in E^{cs}$.
  \end{enumerate}
\end{defn}

We will be interested in systems that are $\gamma$-nearly hyperbolic for $\gamma$ near zero.

\medbreak

The following notion introduced in \cite{BuzziSIM} describes the type of conjugacy we will obtain. Essentially the dynamics are conjugate with respect to ergodic invariant measures with large entropy:

\begin{defn}\label{def:h-conjugate}
Two dynamical systems $f:X\to X$ and $g:Y\to Y$ are \new{entropy-conjugate}
if there exists a partially defined bimeasurable bijection: $\psi:Y\setminus Y_0
\to X\setminus X_0$ such that:
 \begin{itemize}
   \item $\psi\circ g = f\circ \psi$ on $Y\setminus Y_0$,
   \item $\tilde h(f,X_0) := \sup \{ h(f,\nu) :
     \mu\in\Prob(f,X_0)\}< h_{\mathrm{top}}(f)$, and
   \item $\tilde h(g,Y_0) < h_{\mathrm{top}}(g)$.
  \end{itemize}
\end{defn}

We shall prove that the systems we consider are entropy conjugate to Anosov systems.  

\begin{rem}\label{r.conjugate}
Entropic stability of $f$ means that 
any diffeomorphism $C^1$-close to $f$ is entropy-conjugate to $f$.
\end{rem}

\medbreak

We now can state our main result. Theorem~\ref{thm:applyToBV} will follow directly from this result.

\begin{thm}\label{thm:main}
Let $f:M\to M$ be an Anosov diffeomorphism on $M=\TT^d$, $d\geq2$ and let $N\geq1$ be some integer.  There exists $t:=t(f,N)>0$ with the following property. 
Let $\eps,\alpha,\gamma \in(0,t)$ and let $\rho:= (\epsilon^{1/2}-2\epsilon)\cdot\mathrm{diam}M$ and $\Lambda>\frac{\epsilon^{1/2} + 2\epsilon}{\epsilon^{1/2}-2\epsilon}>1$.

Any $g\in\Diff^1(M)$ satisfying:
 \begin{enumerate}
  \item[(H1)] $g$ is an $(\eps,N)$-sparse deformation of $f$;
  \item[(H2)] $g$ ($\alpha, \rho,\Lambda$)-respects the domination of $f$;
  \item[(H3)] $g$ is $\gamma$-hyperbolic;
 \end{enumerate}
is entropy-conjugate to $f$. Moreover, the set of large entropy measures, $\Prob_\erg^h(g)$ for some $h<h_\top(g)$, is almost expansive.
\end{thm}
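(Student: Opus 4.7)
The plan is to build a continuous semiconjugacy $\pi\colon M\to M$ with $f\circ\pi=\pi\circ g$ by shadowing, then show that $\pi$ is bijective outside a set whose invariant measures all have entropy strictly less than $h_\top(f)=h_\top(g)$; almost expansiveness will then fall out of the same fiber analysis.

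First, I would use (H2) together with (H3) to extract a geometric skeleton for $g$. Cone invariance at the large scale $\rho$ forces the $f$-cones $C^u_\alpha,C^s_\alpha$ to be $Dg$-invariant cone fields on all of $M$; integrating them by the standard graph-transform method (as in Hirsch--Pugh--Shub, or the plaque-family construction used in the Bonatti--D\'iaz--Viana setting) yields a $Dg$-invariant dominated splitting $TM=E^{cu}\oplus E^{cs}$ together with $g$-invariant laminations $\FF^{cu},\FF^{cs}$ whose local plaques have uniform radius comparable to $\rho$. Hypothesis (H3) then guarantees that vectors in $E^{cu}$ are contracted at worst like $e^{-\gamma n}$ and those in $E^{cs}$ dilated at worst like $e^{\gamma n}$, so these laminations behave almost as a genuine Anosov splitting, with only a slow exponential defect controlled by $\gamma$.

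The second step is to construct $\pi$. By (H1), $g$ agrees with $f$ up to an $\eps$-small $C^1$ perturbation outside a union of $N$ balls of radius $r<\eps$ that are pairwise $\eps^{1/2}$-apart, so every $g$-orbit is an $O(\eps)$-pseudo-orbit for $f$. The classical Anosov shadowing lemma, applied globally on the universal cover $\RR^d$ of $\TT^d$ where the $f$-lift is uniformly hyperbolic and $g$ differs from $f$ by a bounded $C^0$-perturbation, produces a unique $f$-orbit within $O(\eps)$ of any given $g$-orbit. Defining $\pi(x)$ to be the time-zero point of this $f$-orbit yields a continuous surjection satisfying $f\circ\pi=\pi\circ g$; the choice of $t$ small enough makes the shadowing constant, in turn, small enough for the construction to be valid.

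The main obstacle is the third step: controlling the fibers of $\pi$. Two points $x,y$ with $\pi(x)=\pi(y)$ have $g$-orbits staying $O(\eps)$-apart for all time, hence at every iterate lie in a common plaque intersection $\FF^{cu}_\loc(g^nx)\cap\FF^{cs}_\loc(g^nx)$, so every non-trivial fiber is a bounded connected set sitting in such plaque intersections. I would bound the entropy carried by the non-injective locus $Y_0$ through a Yomdin--Newhouse-type volume-growth argument applied to the $C^1$ disks tangent to the invariant cones: by (H3) their exponential volume growth is at most $(\dim E^{cu}+\dim E^{cs})\cdot O(\gamma)$, while $h_\top(f)$ is bounded below by the (fixed) hyperbolic exponents of $f$ and so is independent of $t$. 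Choosing $t$, and hence $\gamma$, small enough therefore gives $\tilde h(g,Y_0)<h_\top(g)$; pushing this estimate forward through $\pi$ yields the analogous bound for $f$, establishing entropy-conjugacy. Almost expansiveness then follows immediately: fix $\eps_0$ smaller than the shadowing constant, and let $\mu\in\Prob_\erg^h(g)$ for some $h$ just below $h_\top(g)$. The set $Y_0$ of points with non-trivial $\pi$-fiber has $\mu$-measure zero, so for $\mu$-a.e.\ $x$ the condition $\sup_{n\in\ZZ}d(g^nx,g^ny)<\eps_0$ forces $\pi(x)=\pi(y)$ and hence $x=y$.
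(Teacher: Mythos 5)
Your steps (i) and (ii) — building the invariant laminations from the cone conditions and defining $\pi$ by the shadowing lemma on the universal cover — are exactly the paper's route. The gap is in step (iii). The volume-growth/Lipschitz bound coming from (H3) controls only the entropy \emph{inside a single plaque}: since $g^{-1}$ dilates $cu$-plaques by at most $e^{\gamma}$, one gets $h_\top(g^{-1},\FF^{cu}_\loc(x))\le \dim E^{cu}\cdot\gamma$, and via the Ledrappier--Walters inequality this yields $h(g,\nu)\le h(f,\pi_*\nu)+d\gamma$, i.e.\ the entropy barely drops under $\pi$. It does \emph{not} bound $\tilde h(g,Y_0)$ for the non-injectivity locus $Y_0$: that set is a union of uncountably many fibers spread over $M$, and a priori it could carry an ergodic measure of entropy arbitrarily close to $h_\top(g)$ all of whose fibers are nontrivial — Ledrappier--Walters is perfectly consistent with that, since $\pi_*\nu$ may have large entropy. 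Note also that your argument never uses the sparseness in (H1) (the fixed number $N$ of balls and their $\eps^{1/2}$-separation); without it the conclusion is simply false, so some step must exploit it. (A smaller error: two points of a common fiber do \emph{not} lie in $\FF^{cu}_\loc(g^nx)\cap\FF^{cs}_\loc(g^nx)$, which by transversality is the single point $g^nx$; the correct statement, which needs a separate ergodicity argument via a ratio function along the two laminations, is that the Rokhlin conditionals $\nu_x$ on fibers are carried by one $cu$-plaque or by one $cs$-plaque.)

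The missing ingredient is a \emph{non-concentration} estimate, which is where (H1) and the expansivity of the Anosov $f$ enter. Because $f$ is (asymptotically $h$-)expansive, a counting argument with dynamical balls shows: for every $\eta>0$ and $N$ there are $h_0<h_\top(f)$ and $r_0>0$ such that every $\mu\in\Prob_\erg^{h_0}(f)$ gives mass $<\eta$ to any union of $N$ balls of radius $r_0$. Now take $\nu\in\Prob_\erg^{h_1}(g)$ with $h_1=h_0+d\gamma$; by the Ledrappier--Walters step $\pi_*\nu$ has entropy $>h_0$, hence gives mass $<\eta$ to the deformation region, hence so does $\nu$ (up to enlarging the balls by the shadowing constant). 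A $\nu$-typical orbit therefore spends a $(1-\eta)$-fraction of its time where $g$ genuinely expands the $cu$-cone by $\Lambda>1$, and only an $\eta$-fraction where the expansion can degrade to $e^{-\gamma}$; the Birkhoff average of the logarithmic multiplier is $\ge(1-\eta)\log\Lambda-\eta\gamma>0$, which is incompatible with two distinct points of a fiber staying within $K_*\eps$ of each other for all time. This forces the fibers to be singletons $\nu$-a.e., giving both the entropy-conjugacy and (since the hypothesis $\pi(x)=\pi(y)$ was only used through $\sup_{n}d(g^nx,g^ny)<K_*\eps$) the almost expansiveness with constant $\eps_0=K_*\eps$.
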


\subsection{Strategy of  proof}

The proof of Theorem~\ref{thm:main} splits
into the following steps:
 \begin{enumerate}
  \item  Existence of canonical, invariant, center-stable and
  center-unstable foliations for $g$.  The existence of these will follow from the dominated splitting
  and the respect of the domination of $f$ at a certain scale (Sec.~\ref{sec:foliations}).
  \item  Under  the factor map on the Anosov dynamics defined by the shadowing property, the measure-theoretic entropy can decrease only slightly  (Sec.~\ref{sec:extensionAnosov}).

  \item  The large entropy measures of $g$ give little mass to the strong support of the deformation (Sec.~\ref{sec:non-concentration}).
  \item The factor map is actually an entropy-conjugacy, proving  Theorem \ref{thm:main} (Sec.~\ref{sec:factor}).
 \end{enumerate}


\section{Invariant foliations}\label{sec:foliations}

The goal of this section is to build center-stable and center-unstable invariant foliations for our deformation $g$ of some Anosov system $f\in\Diff^1(M)$. As above, we restrict ourselves for simplicity to the  case where $M=\TT^d$.
We first recall some definitions.

\medbreak

A \emph{continuous foliation}  $\FF$ of dimension $k$ with $C^r$ leaves is a partition of the manifold such that there is locally an homeomorphism  mapping $\FF$ to the partition of $\mathbb R^d$ into $k$-planes for some $0\leq k\leq d$ and such that its restriction to any such plane is $C^r$.\footnote{It is well-known that even in the hyperbolic case,  the local homeomorphisms mapping the stable (or unstable) leaves to planes cannot always be chosen $C^1$.}

It is well-known that, in full generality, the existence of a dominated splitting does \emph{not} imply the existence of invariant foliations tangent to each sub-bundle.  In fact even with the stronger assumption of partial hyperbolicity the center direction may not be integrable. In~\cite{BW, P} there are discussions on the integrability of the bundles and some classical examples are given where the integrability does not hold.

Let $D$ be a smooth open disk embedded in $M$. Its \emph{inner radius} at some point $x\in D$, is the distance between $x$ and $\partial D:=\overline{D}\setminus D$.

\begin{thm}\label{thm:foliations}
Let $f\in\Diff^1(M)$ be Anosov with hyperbolic strength $\lambda>1$.  Let $1<\Lambda<\lambda$ and $\alpha>0$. There exists $\eps_1(f,\Lambda,\alpha)>0$  such that for all $0<\eps<\eps_1$ the following holds. 

 Let  $N\geq1$. and set $\rho=(\eps^{1/2}-2\epsilon)$. 
Let $g\in\Diff^1(M)$ be an $(\eps,N)$-deformation of $f$ which  $(\alpha, \rho,\Lambda)$-respects the domination of $f$.

Then $g$ has a dominated splitting $TM=E^{cs}\oplus E^{cu}$ with the same index as the hyperbolic splitting of $f$. Moreover, $g$ admits a center-stable foliation, $\mathcal F^{cs}$, and a center-unstable foliation, $\mathcal F^{cu}$, with the following properties:
 \begin{enumerate}
  \item each foliation is continuous with $C^1$-leaves;
  \item the leaves of the foliations are everywhere tangent to $E^{cs}$, $E^{cu}$ respectively; 
  \item the foliations are invariant under $g$.
 \end{enumerate}
\end{thm}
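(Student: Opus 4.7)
The plan is to first extract the dominated splitting from cone dynamics and then to integrate the bundles into foliations via a macroscopic graph transform whose well-definedness is precisely what hypothesis (H2) is designed to guarantee. Passing to the infinitesimal limit $y,z\to x$ in (H2) gives that $Dg_x$ sends $C^u_\alpha(x)$ into $C^u_\alpha(g(x))$ and $Dg_x^{-1}$ sends $C^s_\alpha(g(x))$ into $C^s_\alpha(x)$, with the comparison $\|Dg_x v\|/\|v\|>\Lambda\,\|Dg_x w\|/\|w\|$ whenever $v\in C^u_\alpha(x)$ and $w\in Dg_x^{-1}(C^s_\alpha(g(x)))$. Iterating, the nested cone sequences $Dg^n(C^u_\alpha(g^{-n}x))$ and $Dg^{-n}(C^s_\alpha(g^nx))$ have intersections $E^{cu}(x)$ and $E^{cs}(x)$ of the required constant dimensions, producing a continuous $Dg$-invariant dominated splitting with the same index as $f$.

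To integrate $E^{cu}$, I would call a smooth embedded $(\dim E^u)$-disk $D\subset M$ a \emph{$cu$-disk through $x$} if $x\in D$, $D$ is everywhere tangent to $C^u_\alpha$, and the inner radius of $D$ at $x$ is at least $\rho$. The key claim, which is where (H2) enters at its full macroscopic strength, is that for every $cu$-disk $D$ through $x$, $g(D)$ is tangent to $C^u_\alpha$ and contains a $cu$-disk through $g(x)$, obtained by restriction: the cone part of (H2) gives tangency, and the expansion part with ratio $\Lambda$ pushes the inner radius to at least $\Lambda\rho>\rho$. Let $\mathcal D^{cu}_x$ be the space of $cu$-disks through $x$ realized as $C^1$-graphs of inner radius $\rho$ over an affine $E^{cu}(x)$-plane; it is compact by Arzel\`a--Ascoli. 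The sets $g^n(\mathcal D^{cu}_{g^{-n}x})\cap \mathcal D^{cu}_x$ then form a decreasing sequence of nonempty compacta, and any disk in their intersection is tangent to $E^{cu}$ at every point because the cones pulled forward thin down exponentially to $E^{cu}$. Backward domination forces the germ at $x$ of such a disk to be unique, yielding a canonical leaf $\mathcal F^{cu}(x)$. The construction is manifestly $g$-invariant; continuity of $x\mapsto\mathcal F^{cu}(x)$ and patching into a bona fide foliation follow from the standard plaque-coherence argument. The center-stable foliation $\mathcal F^{cs}$ is obtained symmetrically from $g^{-1}$.

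The main obstacle is controlling the graph transform at the macroscopic scale $\rho$ rather than infinitesimally: the $C^0$-small but $C^1$-macroscopic deformation inside $B_r$ could a priori destroy cone preservation at scale $\rho$, but (H2) is a non-infinitesimal cone-preservation statement at exactly that scale. The geometric reason this hypothesis is reasonable, and the point at which I would fix $\eps_1(f,\Lambda,\alpha)$ small enough, is that $\rho\sim\eps^{1/2}\cdot\mathrm{diam}(M)$ is much larger than the ball radius $r<\eps$ while the balls are $\eps^{1/2}$-separated, so any $cu$-disk of inner radius $\rho$ that meets a deformation ball extends well beyond its support, where $g$ is $C^1$-close to $f$. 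Choosing $\eps_1$ sufficiently small ensures that the hyperbolicity margin $\lambda>\Lambda$ of the underlying Anosov map compensates whatever distortion $g$ introduces on each $B(x_i,r)$, so the macroscopic cone and expansion estimates for $g$ persist and the graph-transform scheme above closes.
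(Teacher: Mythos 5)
Your proposal follows essentially the same route as the paper: dominated splitting from the invariant cones, then a macroscopic graph transform on disks of inner radius $\rho$ tangent to $C^u_\alpha$, non-shrinking of the inner radius under $g$, uniqueness of the limit disk via backward iteration of the domination inequality, and plaque coherence to assemble the foliation. The one step whose justification does not hold as written is the non-shrinking claim: you assert that ``the expansion part of (H2) with ratio $\Lambda$ pushes the inner radius to at least $\Lambda\rho$,'' but condition (1) of the respect-of-domination hypothesis is purely \emph{relative} --- it bounds the ratio of the growth rate along the $cu$-cone to the growth rate along the $cs$-cone --- and gives no absolute lower bound on $\|g(y)-g(x)\|/\|y-x\|$ (indeed in the Bonatti--Viana examples the $cs$-side is contracted, so the ratio condition is compatible with contraction on the $cu$-side a priori). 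The paper's Claim~\ref{lem:domination} instead obtains $d(g(x),\partial g(D^u_\rho(x)))\geq d(f(x),\partial f(D^u_\rho(x)))-4\eps\geq\Lambda\rho-4\eps>\rho$, i.e.\ the absolute expansion comes from the Anosov map $f$ acting on the unstable cone, transferred to $g$ by the $C^0$-bound $d(f(y),g(y))\leq 2\eps\cdot\diam M$, with the inequality $\Lambda>\frac{\eps^{1/2}+2\eps}{\eps^{1/2}-2\eps}$ and the scale choice $\rho=(\eps^{1/2}-2\eps)\cdot\diam M$ absorbing the error. Your final paragraph actually articulates exactly this mechanism ($\rho\sim\eps^{1/2}\gg r$ plus the margin $\lambda>\Lambda$ off the support), so the fix is already in your write-up; you should simply replace the appeal to (H2) in the key claim by that comparison-with-$f$ argument, keeping track of the $O(\eps)$ error term rather than asserting a clean factor $\Lambda$.
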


The following non-shrinking property is key to our construction:

 \begin{cla}\label{lem:domination}
 Let $f,g\in\Diff^1(M)$ and $N\geq 1$, $\Lambda>1,\alpha>0$, $\eps>0$, and $\rho:=(\eps^{1/2}-2\epsilon)$. Assume that $g$ is an $(\eps,N)$-deformation of an Anosov $f$ which $(\alpha,\rho,\Lambda)$-respects the domination of $f$. Assume also $\Lambda> \frac{\epsilon^{1/2} +2\epsilon}{\epsilon^{1/2}-2\epsilon}>1$.

 Let $x\in M$. Then  for any disk $D^u_{\rho}$ tangent to $C^u_{\alpha}$ and with inner radius $\rho$ at $x$, $g(D^u_{\rho})$ contains a disk tangent to $C^u_{\alpha}$ and with inner radius at least $\rho$ at $g(x)$.
 \end{cla}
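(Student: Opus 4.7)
The plan is to reduce the claim to a chord-length bound in the ambient metric: I would show that for every $y\in D^u_\rho$ at intrinsic distance exactly $\rho$ from $x$, the ambient distance $\|g(y)-g(x)\|$ in $M$ is at least $\rho$. Since the intrinsic distance inside any sub-disk of $g(D^u_\rho)$ is at least the ambient distance in $M$, this directly produces the desired sub-disk of inner radius at least $\rho$ at $g(x)$; the cone-invariance part (property~(2)) of the $(\alpha,\rho,\Lambda)$-respects-domination property, applied in its infinitesimal limit, gives tangency to $C^u_\alpha$.

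The geometric input is $(\eps,N)$-sparsity: since the centres $x_i$ are pairwise $\eps^{1/2}$-separated and each radius $r<\eps$, distinct components of $B_r$ are at ambient distance greater than $\eps^{1/2}-2\eps=\rho$. Consequently the straight chord $[x,y]$, of length at most $\rho$ and with direction in $C^u_\alpha(x)$ (because $D^u_\rho$ is tangent to the narrow cone $C^u_\alpha$), meets at most one component $B(x_i,r)$ of the strong support. Writing $[a,b]=[x,y]\cap B(x_i,r)$ (possibly empty), we obtain a decomposition $[x,y]=[x,a]\cup[a,b]\cup[b,y]$ with $[x,a],[b,y]\subset M\setminus B_r$ and $\|b-a\|\le 2r<2\eps$.

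Next I would bound $\|g(y)-g(x)\|$ using the two kinds of estimates available. On the two outside pieces, where $g$ is $\eps$-close to $f$ in $C^1$, the Anosov expansion of $f$ on $C^u_\alpha$ is transmitted to $g$ up to an $\eps$-error, so the outside chord contributions to $\|g(y)-g(x)\|$ are stretched by a factor of at least $\Lambda$ of their original length, modulo the $\eps$-drift at each endpoint coming from the $C^0$-discrepancy between $g$ and $f$. The cone-invariance part of respects-domination, applied in turn at $x$, $a$ and $b$, places the three successive displacement vectors $g(a)-g(x)$, $g(b)-g(a)$ and $g(y)-g(b)$ inside the respective cones $C^u_\alpha(g(x))$, $C^u_\alpha(g(a))$, $C^u_\alpha(g(b))$; by continuity of the cone field these all differ from $C^u_\alpha(g(x))$ only by a controlled error, so the three vectors add approximately collinearly and yield $\|g(y)-g(x)\|\ge \Lambda\|y-x\|-4\eps$, up to an $O(\alpha)$-error that can be absorbed into the choice of $\eps_1$ in Theorem~\ref{thm:foliations}. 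The constant $4\eps$ absorbs both the $C^0$-displacement between $g$ and $f$ at the two outside endpoints (up to $2\eps$ total) and the short Euclidean length of the bad segment $[a,b]$ (up to $2\eps$). Since $\|y-x\|$ is at least $\rho(1-O(\alpha))$, the hypothesis $\Lambda(\eps^{1/2}-2\eps)>\eps^{1/2}+2\eps$, equivalent to $\Lambda\rho>\rho+4\eps$, finally delivers $\|g(y)-g(x)\|\ge\rho$.

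I expect the main obstacle to be turning the respects-domination hypothesis, stated as a ratio between $C^u$ and $C^s$ expansion rates, into a genuine lower bound on the $C^u$ stretch of $g$; the ratio form by itself does not preclude uniform contraction in $C^u$. The key will be to find a test vector in the stable cone whose image under $g$ is controlled from above, so that the ratio inequality translates into a direct lower bound on the stretch of $g$ on the $C^u$-chord. Such a test vector is guaranteed by the $C^1$-proximity of $g$ to $f$ on a substantial portion of $B(x,\rho)$: sparsity ensures that at most one component of $B_r$ meets $B(x,\rho)$, and that component occupies a ball of radius $r<\eps\ll\rho$, leaving ample room inside $B(x,\rho)\setminus B_r$ on which $g$ behaves like the Anosov $f$ and its stable direction is available.
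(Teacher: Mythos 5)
Your reduction to the chord estimate $\|g(y)-g(x)\|\ge\Lambda\|y-x\|-4\eps$ followed by $\Lambda\rho-4\eps>\rho$ is exactly the arithmetic of the paper's proof, and the tangency statement via cone invariance is also the paper's. The gap is in how you handle the portion of the chord inside the strong support. You budget ``up to $2\eps$'' for the bad segment $[a,b]$ on the grounds that its \emph{Euclidean length} is at most $2r<2\eps$, but what enters the estimate is the length of its \emph{image}, $\|g(b)-g(a)\|$, and nothing in the hypotheses bounds $Dg$ (hence the local Lipschitz constant of $g$) inside $B(x_i,r)$ --- the deformation is $C^1$-macroscopic there by design. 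Your fallback, that the three displacement vectors lie in unstable cones and therefore ``add approximately collinearly,'' does not rescue this: $C^u_\alpha$ is a symmetric cone around a subspace $E^u$ of dimension possibly $\ge 2$, so it contains vectors of every orientation within $E^u$; membership of $g(b)-g(a)$ in $C^u_\alpha(g(a))$ carries no sign information, and the middle term can partially cancel the outer ones. Likewise the mechanism you propose in your last paragraph --- feeding a stable test vector into the ratio inequality of the respects-domination property --- only yields $\|g(y)-g(x)\|/\|y-x\|>\Lambda\cdot(\text{stable stretch of }g)$, and near $f$ that stable stretch is bounded \emph{above} by $\mu_0<1$, so this produces no useful lower bound.

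The missing ingredient, which is the actual content of the paper's short proof, is a \emph{global} $C^0$ bound: because $g$ is a homeomorphism of $M$ that agrees with $f$ up to $\eps$ outside the balls $B(x_i,r)$ of radius $r<\eps$, the image $g(B(x_i,r))$ is an open set whose boundary $g(\partial B(x_i,r))$ lies within $O(\eps)$ of $f(x_i)$, hence the whole image is trapped there and $d_{C^0}(f,g)=O(\eps)$ on all of $M$, including inside the support. With that bound one compares $g(D^u_\rho)$ to $f(D^u_\rho)$ wholesale, $d(g(x),\partial g(D^u_\rho))\ge d(f(x),\partial f(D^u_\rho))-4\eps\ge\Lambda\rho-4\eps>\rho$, and no chord decomposition (and no use of the $\eps^{1/2}$-separation of the centres) is needed. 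Incidentally, even granting a bound $\|g(b)-g(a)\|=O(\eps)$, your accounting loses $\Lambda\|b-a\|$ on top of the two endpoint drifts, so the total error exceeds $4\eps$ and the stated hypothesis $\Lambda>(\eps^{1/2}+2\eps)/(\eps^{1/2}-2\eps)$ is no longer quite sufficient; the wholesale comparison avoids this constant leakage as well.
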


\begin{proof}[Proof of Claim \ref{lem:domination}]
 Let $x\in M$. The invariance of the center-unstable cone implies that $g(D^u_\rho(x))$ is tangent to $C^u_\alpha$. It remains to see that this disk also has large diameter. As $f$ and $g$ are $C^0$-close, 
$d(f(y),g(y))\leq 2\epsilon\cdot\mathrm{diam}M.$
It follows that
 $$
 \begin{array}{llll}
      d(g(x),\partial g( D^u_\rho(x))) &\geq d(f(x),\partial f(D^u_\rho(x)))-4\epsilon\\
&       \geq \Lambda\rho-4\epsilon\\
& >\frac{\epsilon^{1/2}+2\epsilon}{\epsilon^{1/2}- 2\epsilon}(\epsilon^{1/2}-2\epsilon)-4\epsilon\\
&=\rho.
\end{array}$$
The claim is proved.
\end{proof}
\medskip

\begin{proof}[Proof of Theorem \ref{thm:foliations}]
We observe that the existence of the dominated splitting $E^{cs}\oplus E^{cu}$ for $g$ is a well-known consequence of the existence of the invariant cones.  See for instance~\cite[p. 293]{BUH}.

We first fix $x\in M$ and construct a sequence of disks of inner radius at least $\rho$ at $x$ and everywhere tangent to \modified{$C^{cu}$.

For each $n\geq0$, let $D_{x,n}^{-n}$ be an embedded open
smooth disk tangent to the  unstable cone field $C^u_\alpha$ defined by the hyperbolic splitting of $f$ and with inner radius $\rho$ at $g^{-n}(x)$.   By compactness of $M$, there is $\eps_0>0$, independent of $x$  so that, for any $0<\eps<\eps_0$, such a disk always locally exists, independently of any integrability condition.\footnote{For instance, we can take a disk in the local stable manifold of $x$ with respect to the Anosov diffeomorphism, $f$.} By reducing $\eps_0>0$ if necessary, we also ensure $\Lambda> \frac{\epsilon_0^{1/2} +2\epsilon_0}{\epsilon_0^{1/2}-2\epsilon_0} >1$. Using an obvious identification, we can chose $D_{x,n}^{-n}$ to be the graph of a map defined on an open subset of $E^u_\alpha(x)$ taking values in $E^{cs}_\alpha(x)$ and with small Lipschitz constant.}

Let $D_{x,n}^{k+1}:=g(D_{x,n}^k)\cap B(g^{k+1}x,\rho)$ for all
$k=-n,\dots,-1$. Standard graph transform estimates show that 
$(D_{x,n}^0)_{n\geq0}$ is a family of graphs of equicontinuous functions.
Moreover, their domains of definition contain the disk of 
$x+E^{cu}_x$ with center $x$ and radius $\rho$ from Claim~\ref{lem:domination}.   Thus, one can find a subsequence such that these functions converge uniformly to a function with bounded Lipschitz constant. Let $D_x$ be the limit graph. 

Note that every $y\in D_x$ satisfies $d(g^{-n}y,g^{-n}x)\leq \rho$ for all $n\geq0$. This allows the use the non-linear domination and to get, through standard arguments, that $D_x$ is $C^1$
with tangent spaces obtained as intersections of nested and
exponentially shrinking cones. In particular, these tangent
spaces coincide with $E^{cu}$.

Let us show that the sequence $(D_{x,n}^0)_{n\geq0}$ is actually
convergent by checking that the limit graph is unique
and independent of the choice of $D_{x,n}^{-n}$:

\begin{figure}[htb]
\begin{center}
\psfrag{x}{$x$}
\psfrag{y}{$y$}
\psfrag{z}{$y'$}
\psfrag{d}{$D_1$}
\psfrag{e}{$D_2$}
\psfrag{f}{$\Delta$}
\psfrag{g}{$C^{s}$}
\psfrag{h}{$C^{u}$}
\includegraphics{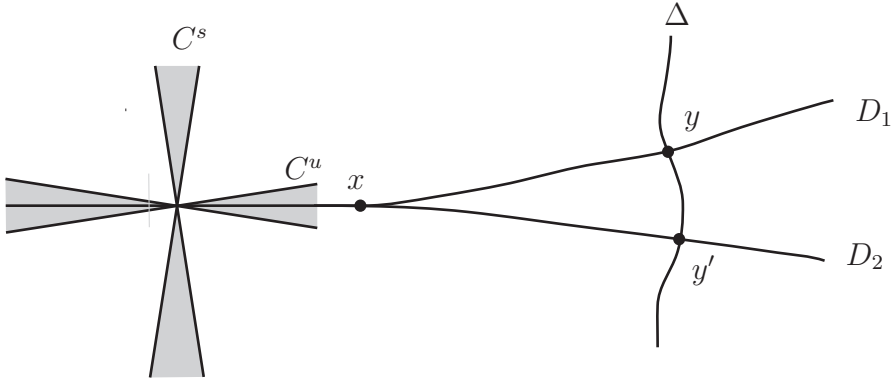}
\caption{unique disks}\label{f.linear}
\end{center}
\end{figure}
By contradiction, consider two distinct limit disks $D_1$ and $D_2$.
Thus, there exists a disk, $\Delta$, tangent to $E^{s}$ which intersects
$D_1$ and $D_2$ in two distinct points $y$ and $y'$.  By construction, for all $n\geq0$ we have
\begin{itemize}
\item $g^{-n}y, g^{-n}y'\in B(g^{-n}x,\rho)$;  
\item $g^{-n}y-g^{-n}x\in C^{cu}_\alpha(g^{-n}x)$; 
\item $g^{-n}y-g^{-n}y'\in C^{cs}_\alpha(g^{-n}x)$. 
\end{itemize}
It follows that, for all $n\geq0$:
 $$
     \frac{\|y-x\|}{\|g^{-n}y-g^{-n}x\|} \geq \Lambda^{n} \frac{\|y-y'\|}{\|g^{-n}y-g^{-n}y'\|}.
 $$
But $g^{-n}D,g^{-n}D'$ are contained in the cone $C^{u}_{\alpha_1}(g^{-n}x)$, thus
$$\|g^{-n}y-g^{-n}y'\|\leq K\|g^{-n}y-g^{-n}x\|$$ for some uniform $1<K<\infty$
and
 $$
     \|y-y'\| \leq \Lambda^{-n} \frac{\|g^{-n}y-g^{-n}y'\|}{\|g^{-n}y-g^{-n}x\|} \|y-x\|
       \leq K\Lambda^{-n} \|y-x\|.
 $$
Letting $n\to\infty$, we see that $y=y'$, a contradiction.

Note that the canonical character of the disks $D_x$ imply their equivariance: $g(D_x)\cap B(gx,\rho)=D_{gx}$. Also, the above argument implies the following uniqueness property.
For any $x,y\in M$,
if $z\in D^{cu}_x\cap D^{cu}_y$, then 
 \begin{equation}\label{eq:Dcu-uniq}
   D^{cu}_x\cap D^{cu}_y\cap B(z,\rho)\subset D^{cu}_z.
 \end{equation}

We now define the partition candidate to be an invariant center-unstable foliation. For each $x\in M$ we let $\FF^{cu}(x)$ be the set of all  $y\in M$ such that
there exist finitely many points $x_1,\dots,x_n$ satisfying: $x\in D^{cu}_{x_1}$, $y\in D^{cu}_{x_n}$ and $D^{cu}_{x_i}\cap D^{cu}_{x_{i+1}}\ne\emptyset$ for $i=1,\dots,n-1$. It follows from this definition that $\FF^{cu}$ is a partition and that it is invariant: $g(\FF^{cu}(x))=\FF^{cu}(g(x))$.

To prove that $\FF^{cu}$ is indeed a foliation, it remains to check that each $\FF^{cu}(x)$ intersects any small ball in a disjoint union of smooth disks and that the connected component of $x$ depends continuously in the $C^1$ topology of the base point $x$.
Let us set $F_x:=\FF^{cu}(x)\cap B(x,\rho/2)$. Obviously,
$$F_x=\bigcup_{y\in F_x} D^{cu}_y\cap B(x,\rho/2).$$ 
It follows from \eqref{eq:Dcu-uniq} that this is a disjoint union in the sense that either 
$$D^{cu}_y\cap B(x,\rho/2)=D^{cu}_{y'}\cap B(x,\rho/2)$$
or the two sets are disjoint. Thus, the connected component of $F_x$ containing $x$ is $D^{cu}_x\cap B(x,\rho/2)$. The construction of $D^{cu}_x$ shows that this is indeed a $C^1$ submanifold that depends continuously on $x$.

The claims of the theorem for $\FF^{cu}$ are proved. The proofs for $\FF^{cs}$ are completely analogous.
\end{proof}

\section{Almost Principal Extension of the Anosov}\label{sec:extensionAnosov}

In this section we let $g\in\mathrm{Diff}^1(M)$ ($M=\TT^d$), a sufficiently small $C^0$-perturbation of an Anosov diffeomorphism, $f$, and study the continuous factor map $\pi:(M,g)\to (M,f)$ given by the shadowing lemma (see Lemma \ref{l.shadowing} below).
We observe that the fibers $\pi^{-1}(x)$ for $x\in M$ have a small diameter. Second, we show that if $g$ respects the domination of $f$  and is nearly hyperbolic, then for a.e. $x\in M$, $\pi^{-1}(x)$ is contained in a leaf of the center-unstable or center-stable foliation (given by Theorem~\ref{thm:foliations}). 

\subsection{Shadowing for Anosov diffeomorphisms}

We recall the following well-known fact about hyperbolic dynamics.  For a proof see for instance~\cite[p. 109]{shub}.

\begin{lem}[Shadowing Lemma]\label{l.shadowing}
Let $f:M\to M$ be Anosov. There exist numbers $\eps_0>0$ and $K_0<\infty$ with the following property.

For any homeomorphism $g:M\to M$ with 
$$d_{C^0}(f,g):=\sup_{x\in M} d(f(x),g(x))+d(f^{-1}(x),g^{-1}(x))<\eps_0,$$ 
there is a  topological factor map $\pi:(M,g)\to(M,f)$. Moreover, $\sup_{x\in M} \diam(\pi^{-1}(x))\leq K_0 d_{C^0}(f,g)$. 
\end{lem}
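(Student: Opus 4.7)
The plan is to construct $\pi$ from the classical pseudo-orbit shadowing property of Anosov systems. Recall that every Anosov diffeomorphism $f$ is expansive and enjoys the shadowing property: there exist constants $\eps_0>0$ and $K_0<\infty$ such that every $\delta$-pseudo-orbit $(y_n)_{n\in\ZZ}$ of $f$ with $\delta<\eps_0$ is uniquely $K_0\delta$-shadowed by a true $f$-orbit $(f^n(z))_{n\in\ZZ}$; uniqueness comes from the expansivity of $f$. I would take this classical fact, which is exactly what is cited from Shub, as a black box.

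Now fix $g$ with $\delta:=d_{C^0}(f,g)<\eps_0$. For each $x\in M$, the sequence $(g^n(x))_{n\in\ZZ}$ satisfies $d(f(g^n(x)),g^{n+1}(x))\leq\delta$, so it is a $\delta$-pseudo-orbit of $f$. Define $\pi(x)\in M$ to be the unique point with $d(f^n(\pi(x)),g^n(x))\leq K_0\delta$ for all $n\in\ZZ$. The semiconjugacy $f\circ\pi=\pi\circ g$ is then immediate from uniqueness, since the $g$-orbit of $g(x)$ is the time-shift of the $g$-orbit of $x$, so its shadow must be the shift of $\pi(x)$'s $f$-orbit. Continuity of $\pi$ follows from uniqueness together with compactness of $M$: any accumulation point of $\pi(x_k)$ for $x_k\to x$ again shadows the $g$-orbit of $x$, hence equals $\pi(x)$. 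Evaluating the shadowing estimate at $n=0$ gives $d(x,\pi(x))\leq K_0\delta$; thus any two points of $\pi^{-1}(y)$ lie within $K_0\delta$ of $y$, yielding the fiber-diameter bound up to an innocuous constant absorbed into $K_0$.

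The last step is surjectivity. Here one uses that $\pi$ is continuous and $K_0\delta$-close to $\mathrm{id}_M$: shrinking $\eps_0$ if necessary so that $K_0\delta$ is below the injectivity radius of $M$, one connects each $x$ to $\pi(x)$ by the unique minimizing geodesic and obtains a continuous homotopy $\pi\simeq\mathrm{id}_M$. Hence $\pi$ has topological degree one as a self-map of the closed manifold $M=\TT^d$, and is in particular surjective. This exhibits $\pi:(M,g)\to(M,f)$ as a topological factor map satisfying the quantitative fiber bound.

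The main obstacle is the surjectivity step: definition of $\pi$ and the semiconjugacy property are routine once shadowing and expansivity are invoked, but surjectivity is not automatic and requires the topological input that $\pi$ is $C^0$-close to the identity. The quantitative shadowing bound $d(x,\pi(x))\leq K_0 d_{C^0}(f,g)$ is therefore not an incidental by-product of the construction but the key ingredient that lets the homotopy/degree argument go through.
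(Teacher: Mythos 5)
Your argument is correct and is essentially the standard proof of this classical fact; the paper itself gives no proof, simply citing Shub (p.~109), and what you have written is precisely the argument found there: $g$-orbits are $\delta$-pseudo-orbits of $f$, Lipschitz shadowing plus expansivity define $\pi$ uniquely, uniqueness gives the semiconjugacy and continuity, and surjectivity follows from the degree-one/homotopy-to-the-identity argument. Your closing observation that surjectivity is the only non-routine step, and that it hinges on the quantitative bound $d(x,\pi(x))\leq K_0\,d_{C^0}(f,g)$, is exactly right.
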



To fix some notations, we recall the following classical notion.

\begin{defn}
Two foliations $\FF^{1},\FF^{2}$ have a \new{(local) product structure} if there exist constants $\tau_1,\tau_2>0$ and $1<K<\infty$ such that the following hold:
for all points $x,y$ within distance less than $\tau_1$, $\FF^{1}_{\tau_2}(x)$
(the connected component of $\FF^{1}(x)\cap B(x,\tau_2)$ containing
$x$) intersects the similarly defined $\FF^{2}_{\tau_2}(x)$ at
exactly one point, $z$, and $d(x,z)\leq K d(x,y)$.
\end{defn}

\begin{rem}A compact manifold with transverse continuous foliations $\mathcal{F}_1$ and $\mathcal{F}_2$ with $C^1$ leaves has a product structure for $\mathcal{F}_1$ and $\mathcal{F}_2$ for \emph{some} constants $\tau_1,\tau_2,K$.
\end{rem}

\subsection{Inclusion in $\FF^{cu}$ or $\FF^{cs}$}

The next proposition shows that for an appropriate deformation $g$, the  fibers of the ergodic invariant probability measures for $g$ disintegrated over $f$ are contained in the leaves of one of the dynamical foliations. 

\begin{prop}\label{prop:flat}
Let $f:M\to M$ be Anosov with shadowing constants $\eps_0>0$ and $K_0<\infty$ and hyperbolicity strength $\lambda>1$. Let $N\geq1$ and $1<\Lambda<\lambda$, $\alpha>0$,  $\tau_1,\tau_2>0$ and $K<\infty$. There exists $\eps_2(f,\Lambda,\alpha,\eps_0,K_0,\tau_1,\tau_2,K)>0$ with the following property for all $0<\eps<\eps_2$ and $g\in\Diff^1(M)$ which
 \begin{itemize}
  \item is an ($\eps,N$)-sparse deformation of $f$;
  \item ($\alpha,\rho,\Lambda)$-respects the  domination with $\rho:=(\eps^{1/2}-2\eps)$;
  \item preserves center-stable and center-unstable foliations $\FF^{cs},\FF^{cu}$ tangent to the cone fields $C^s_\alpha,C^u_\alpha$ and define a product structure with constants $\tau_1,\tau_2,K$.
 \end{itemize}
For any any $g$-invariant, ergodic probability measure $\nu$, there exists $\sigma=cs$ or $cu$ such that, 
 $$
         \text{ for $\nu$-a.e. $x\in M$ } \nu_x(\FF^\sigma_{\tau_2}(x)) = 1,
 $$
where $\nu=\int_M \nu_x \, d\pi_*\nu$ is the Rokhlin disintegration of $\nu$
w.r.t. $\pi$ (see \cite{Rudolph}).
\end{prop}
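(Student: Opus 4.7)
The plan is to combine the smallness of $\pi$-fibers with a Pesin-theoretic dichotomy extracted from the $(\alpha,\rho,\Lambda)$-domination and the $\gamma$-near hyperbolicity. First, by Lemma~\ref{l.shadowing}, $\diam\pi^{-1}(z)\leq K_0\,d_{C^0}(f,g)=O(\eps)$; choosing $\eps_2$ small enough, this is much smaller than $\tau_1$, so each fiber lies inside a product box for $(\FF^{cs},\FF^{cu})$, where Theorem~\ref{thm:foliations} supplies those foliations.

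Next I set up an ergodic dichotomy. Define
\[
  A^\sigma := \{\, y\in M : \nu_y(\FF^\sigma_{\tau_2}(y)) = 1 \,\}, \qquad \sigma\in\{cs,cu\}.
\]
Using $\pi\circ g = f\circ\pi$, the $g$-invariance of the foliations, and uniqueness of the Rokhlin disintegration (which gives $\nu_{g(y)}=g_*\nu_y$ for $\nu$-a.e.\ $y$), each $A^\sigma$ is a $g$-invariant Borel set; by ergodicity, $\nu(A^\sigma)\in\{0,1\}$, and it suffices to show $\nu(A^{cs}\cup A^{cu})=1$.

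The core of the argument is then a Pesin dichotomy. Apply Oseledets' theorem to the ergodic $\nu$: the Lyapunov exponents are constant $\nu$-a.e., and the infinitesimal form of the $(\alpha,\rho,\Lambda)$-domination forces $\chi^{cu}_{\min}>\chi^{cs}_{\max}$ (with a gap of at least $\log\Lambda/\ell$), while $\gamma$-near hyperbolicity yields $\chi^{cu}_{\min}\geq-\gamma$ and $\chi^{cs}_{\max}\leq+\gamma$. Consequently at least one of the following holds: either $\chi^{cu}_{\min}>0$ (\emph{Case A}) or $\chi^{cs}_{\max}<0$ (\emph{Case B}). In Case A, pick $\nu$-a.e.\ regular $y$ and $y'\in\pi^{-1}(\pi(y))$, and write $y'-y=v^{cs}+v^{cu}$ in the Oseledets decomposition aligned with the dominated splitting $E^{cs}\oplus E^{cu}$; a nonzero $v^{cu}$ would force $\|g^ny'-g^ny\|\gtrsim e^{n\chi^{cu}_{\min}}\|v^{cu}\|$ on a Pesin block of uniform constants, contradicting the bounded fiber diameter. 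Hence $v^{cu}=0$, i.e., $y'\in\FF^{cs}_{\tau_2}(y)$, and so $y\in A^{cs}$. Case B is symmetric via backward iteration (where now $v^{cs}$ expands at rate $-\chi^{cs}_{\max}>0$), giving $y\in A^{cu}$. In either case $\nu(A^{cs}\cup A^{cu})=1$, which together with Step~2 finishes the proof.

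The main obstacle is translating Oseledets' \emph{infinitesimal} exponential growth rate into the \emph{macroscopic} conclusion that nonzero $v^{cu}$ actually pushes $g^ny'$ out of the fiber. I would handle this by restricting to a Pesin block of measure arbitrarily close to one on which the tempered constants are uniform and the local chart size exceeds the fiber diameter; the $g$-invariance of $A^\sigma$ combined with ergodicity then propagates the conclusion from the Pesin block to a full-measure set. A secondary technical point is identifying the sum of positive-exponent Oseledets subspaces with $E^{cu}$ (rather than something merely contained in it), which follows from the strict Lyapunov gap coming from the domination.
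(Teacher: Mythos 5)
Your reduction to the invariant sets $A^\sigma$ and the appeal to ergodicity are fine, and the dichotomy itself (Case A or Case B must hold, since the dominated splitting forces the strict gap $\chi^{cu}_{\min}>\chi^{cs}_{\max}$) is correct. The genuine gap is exactly at the step you flag as ``the main obstacle,'' and the fix you propose does not close it. To contradict the bounded fiber diameter in Case A you need $\|g^ny'-g^ny\|\gtrsim e^{n\chi^{cu}_{\min}}\|v^{cu}\|$ to hold at the \emph{fixed} scale $K_0\eps$ of the fiber; Pesin theory gives such growth only inside Lyapunov charts whose size is not uniformly bounded below (it degenerates along the orbit at a tempered rate), so you cannot arrange that the charts contain the fiber, and between returns to a Pesin block the separation of the two actual orbits is uncontrolled. (For $C^1$ maps the Pesin machinery is itself delicate, but even granting it, the scale mismatch remains.) Note also that the proposition is asserted for \emph{every} ergodic measure, including measures concentrated near the deformation balls where $E^{cu}$ may be genuinely contracted; any argument resting on \emph{absolute} expansion or contraction would need the non-concentration input of Section~5, which is available only for large-entropy measures. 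A secondary gap: $v^{cu}=0$ in the linear decomposition of the displacement $y'-y$ does not by itself place $y'$ on the curved leaf $\FF^{cs}_{\tau_2}(y)$; that conclusion requires the product-structure coordinates, not the tangent-space splitting at $y$.

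The hypotheses hand you the tool that avoids all of this: the $(\alpha,\rho,\Lambda)$-respect of domination is a \emph{macroscopic, relative} statement about actual pairs of points at distance up to $\rho>K_0\eps$. The paper's proof exploits it by forming the relative square $\mu=\int_M\nu_x\times\nu_x\,d\pi_*\nu$, a $g\times g$-invariant measure on pairs in a common fiber, and for such a pair $(x,y)$ setting $z=\FF^{cs}_{\tau_2}(x)\cap\FF^{cu}_{\tau_2}(y)$ and $R(x,y)=d_{\FF^{cs}(x)}(x,z)/d_{\FF^{cu}(y)}(y,z)$. The macroscopic domination gives $R(gx,gy)\leq\Lambda^{-1}R(x,y)$, so invariance of $\mu$ forces $R\in\{0,\infty\}$ almost everywhere, i.e.\ almost every pair lies on a common $cu$-leaf or a common $cs$-leaf; a short measurability argument excludes mixing the two types within one fiber, and ergodicity makes the type constant. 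No Lyapunov exponents, no Pesin blocks, and only relative (never absolute) contraction are used. If you want to rescue your scheme, replace the Oseledets decomposition of $y'-y$ by the ratio $R$ (equivalently, by the product-structure coordinates $y^u,y^s$) and substitute the macroscopic domination for the infinitesimal exponents.
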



\begin{proof}
We shall establish the required property under a (finite) number of upperbounds on $\eps_2$. Recall the number $\eps_1(f,\Lambda,\alpha)>0$ from Theorem \ref{thm:foliations}. The first bound is:
 $$
     \eps_2<\min\{\eps_1, 1/({2K_0})^{2}\}
 $$
so $2\epsilon K_0<\epsilon^{1/2}$: the  balls of radius $K_0\epsilon$ around the $N$ centers of the $(\epsilon, N)$-deformation $g$ are disjoint.  This will be useful with regards to the Shadowing Lemma: recall that $\diam(\pi^{-1}(x))\leq K_0\eps$.

Let $\nu$ be an invariant ergodic measure for $g$ with its disintegration $(\nu_x)_{x\in M}$ as above. Since $\pi$ is a semi-conjugacy,  $\pi_*\nu$ is an ergodic probability measure for $f$.

It is convenient to set aside the trivial case where $\nu_x=\delta_x$ for a
set of positive (and hence full) $\pi_*\nu$-measure of points $x\in M$.

Let $\mu$ be the Cartesian square of $\nu$ relatively to the $\pi$ factor.
In other words, $\mu$ is the probability measure for $g\times g$ on $M\times M$
given by
 $$
   \mu = \int_M \nu_x\times\nu_x \, d\pi_*\nu.
 $$ 
 Observe that it is $g\times g$-invariant.

We define  a measurable function $R:M\times M\to [0,\infty]$ as follows.
For $(x,y)\in M\times M$, let 
$z$ be the unique intersection point
of $\FF^{cs}_{\tau_2}(x)$ and $\FF^{cu}_{\tau_2}(y)$ (if one exists). We set
 $$
     R(x,y) := \left\{
     \begin{array}{llll}
     \frac{d_{\FF^{cs}(x)}(x,z)}{d_{\FF^{cu}(y)}(y,z)} &\textrm{ if }z\textrm{ exists and }z\neq y\\
     \infty & \textrm{ else}
     \end{array}
     \right.
 $$
where $d_N(\cdot,\cdot)$ denotes the geodesic distance along the
submanifold $N$ using the induced Riemannian structure. 

Note that as $\pi(x)=\pi(y)$ for $\mu$-a.e.~$(x,y)$,  
$d(x,y)<K_0\epsilon$. To use the product structure, we impose our second bound on $\eps_2$:
 $$
     \eps_2<\tau_1/K_0
 $$
Thus, by the transversality assumption on $\FF^{cu},\FF^{cs}$, $z$ is well-defined and $x,y\in B(z,KK_0\epsilon)$ .
To use the respect of the domination, we impose our third bound:
 $$
    \eps_2< (2/KK_0)^2,
 $$
so $\rho>KK_0\epsilon$ and therefore, if $R(x,y)<\infty$, then $R(g^nx,g^ny)\to0$ when $n\to\infty$.
The invariance of $\mu$ implies $R(x,y)=0$ or $\infty$ $\mu$-a.e. 

We claim that $R$ is $\pi$-measurable. Otherwise there would
exist a set of positive $\pi_*\nu$-measure of points $x\in M$,
such that 
$$\begin{array}{llll}
\{(y,z)\in M\times M:R(y,z)=0\}\textrm{ and}\\ 
\{(y,z)\in M\times M:R(y,z)=\infty\}
\end{array}
$$ 
have both positive $\nu_x\times\nu_x$-measure.
Now, observe that
 $$
    \{(y,z):R(y,z)=0\} = \{(y,z): \FF^{u}_{\tau_2}(y)=\FF^{u}_{\tau_2}(z)\}
 $$
and that if this set has positive $\nu_x\times\nu_x$-measure for a set of $x\in M$ with positive $\pi_*\nu$-measure, then there exists a measurable function of $x$, $y_x$ such that $\nu_x(\FF^{cu}_{\tau_2}(y_x))>0$ over a set of positive $\pi_*\nu$-measure. Similarly there exists a measurable $z_x$ such that $\nu_x(\FF^{su}_{\tau_2}(z_x))>0$. It follows that:
  $$
    (\nu_x\times\nu_x)(\FF^{cu}_{\tau_2}(y_x)\times\FF^{cs}_{\tau_2}(z_x))>0.
  $$
As $\nu_x\ne\delta_x$ by assumption, it follows that $0<R(y,z)<\infty$
with positive $\mu$-measure, a contradiction.
\end{proof}

\section{Non-concentration}\label{sec:non-concentration}

We consider an asymptotically entropy-expansive diffeomorphism (whose definition is recalled below) and show that its large entropy measures cannot be concentrated around a fixed number of points. We will apply this to Anosov diffeomorphisms.

We recall Bowen's entropy formula for a subset $Y\subset M$ in terms of dynamical $(\eps,n)$-balls $$B_f(x,\eps,n):=\{y\in M:\forall 0\leq k<n\; d(f^ky,f^kx)<\eps\}.$$ We have
 $
    h_\top(f,Y) := \lim_{\eps\to0} h_\top(f,Y,\eps)$ 
 ($h_\top(f)=h_\top(f,M)$)  with
$$
  h_\top(f,Y,\eps):=\limsup_{n\to\infty}\frac1n\log r_f(\eps,n,Y)
 $$
where $r_f(\eps,n,Y)$ is the minimal number of dynamical $(\eps,n)$-balls needed to cover $Y$.
Katok \cite{Katok} established a similar formula for the entropy of an ergodic invariant probability measure:
 $$
     h(f,\mu) = \lim_{\eps\to0} h_\top(f,\mu,\eps) \text{ with } h_\top(f,\mu,\eps):=\limsup_{n\to\infty}\frac1n\log r_f(\eps,n,\mu)
 $$
where $r_f(\eps,n,\mu)$ is the minimal number of dynamical $(\eps,n)$-balls with union of measure at least $1/2$. \footnote{One can replace $1/2$ by  any other fixed number in $(0,1)$.}

Finally, we recall Misiurewicz's local  (or conditional, or tail) entropy \cite{MisiurewiczLocal}:
 $$
    h_\loc(f) := \lim_{\eps\to0} h_\loc(f,\eps) \text{ with }
   h_\loc(f,\eps):= \sup_{x\in M} h_\top(f,B_f(x,\eps,\infty)).
 $$


\subsection{Large entropy measures of $f$}

\begin{lem}\label{lem:deconcentrate-f}
Let $f$ be a homeomorphism of $M$ which is asymptotically $h$-expansive (i.e., $h_\loc(f)=0$) with $h_\top(f)>0$.
For any $\eta>0$ and $N\geq1$, there exist $h<h_\top(f)$ and $r>0$ such that any $\mu\in\Prob_\erg^h(f)$ satisfies $\mu\left(\bigcup_{i=1}^N B(x_i,r)\right)<\eta$, for any set of $N$ points $x_1,\dots,x_N\in M$.
\end{lem}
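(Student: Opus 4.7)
The plan is to argue by contradiction, using the fact that asymptotic $h$-expansivity guarantees (by Misiurewicz's theorem) that the entropy map $\nu \mapsto h_\nu(f)$ is upper semi-continuous on $\Prob(f)$ in the weak-$*$ topology. If the statement fails for some $\eta > 0$ and $N \geq 1$, then I would extract sequences $h_n \uparrow h_\top(f)$, $r_n \downarrow 0$, measures $\mu_n \in \Prob_\erg^{h_n}(f)$, and $N$-tuples $(x_1^n,\dots,x_N^n)$ such that $\mu_n(U_n) \geq \eta$ for $U_n := \bigcup_i B(x_i^n, r_n)$. By weak-$*$ compactness of $\Prob(f)$ and compactness of $M^N$, I would pass to a subsequence along which $\mu_n \to \mu$ weakly and $x_i^n \to x_i^\ast$, setting $S := \{x_1^\ast,\dots,x_N^\ast\}$; then $\mu \in \Prob(f)$.

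The first substantive step is to show that $\mu$ carries a large atomic part. For any closed neighborhood $V$ of $S$, eventually $U_n \subset V$, so $\mu_n(V) \geq \eta$; the Portmanteau theorem for closed sets then yields $\mu(V) \geq \limsup_n \mu_n(V) \geq \eta$. Letting $V$ shrink to $S$ gives $\mu(S) \geq \eta$, and since $S$ is finite all of this mass is carried by atoms. Because $f$-invariance forces atoms of $\mu$ to lie on finite (periodic) orbits, the invariant decomposition $\mu = \alpha \mu_a + (1-\alpha) \mu_c$ into purely atomic and continuous parts satisfies $\alpha \geq \eta$, and $\mu_a$ is a convex combination of periodic orbit measures, whence $h_{\mu_a}(f) = 0$.

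The second step closes the contradiction. Upper semi-continuity of entropy gives $h_\mu(f) \geq \limsup_n h_{\mu_n}(f) \geq \lim_n h_n = h_\top(f)$, so $h_\mu(f) = h_\top(f)$. But by affinity of the entropy map on $\Prob(f)$, $h_\mu(f) = \alpha h_{\mu_a}(f) + (1-\alpha) h_{\mu_c}(f) \leq (1-\alpha) h_\top(f) \leq (1-\eta) h_\top(f)$, which is strictly less than $h_\top(f)$ since $h_\top(f) > 0$, the desired contradiction. The only delicate point in the plan is transferring mass from the shrinking open sets $U_n$ to the finite limit set $S$: this forces the use of closed neighborhoods of $S$ and the Portmanteau statement for closed sets, rather than the reverse (and useless) inequality for open sets. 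All the remaining ingredients --- weak-$*$ compactness, Misiurewicz's upper semi-continuity for asymptotically $h$-expansive systems, the affinity of $\nu \mapsto h_\nu(f)$, and the fact that invariant atomic measures are supported on periodic orbits and therefore have zero entropy --- are classical.
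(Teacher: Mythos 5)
Your proof is correct, but it takes a genuinely different route from the paper's. The paper argues directly and quantitatively: it fixes a small scale $s_0$ coming from asymptotic $h$-expansivity (so that refining a cover by $(s_0,n)$-balls to smaller scale costs at most $e^{\eps n}$), decomposes orbit segments of a typical point into blocks spent inside and outside $\bigcup_i B(x_i,r)$, and multiplies the resulting counting estimates via Katok's entropy formula to get the explicit bound $h(f,\mu)\leq (1-\eta)h_\top(f)+3\eps$ whenever $\mu$ gives mass $\geq\eta$ to the union of balls; this produces concrete values of $h$ and $r$, uniform in the centers $x_1,\dots,x_N$. You instead run a soft compactness argument by contradiction: extract a weak-$*$ limit $\mu$ of measures $\mu_n$ with entropies tending to $h_\top(f)$ and concentration $\geq\eta$ on shrinking balls, use the closed-set Portmanteau inequality to transfer mass $\geq\eta$ onto the finite limit set (hence onto periodic atoms with zero entropy), invoke Misiurewicz's upper semi-continuity of $\nu\mapsto h_\nu(f)$ under asymptotic $h$-expansivity to force $h_\mu(f)=h_\top(f)$, and contradict this via affinity of entropy, which caps $h_\mu(f)$ at $(1-\eta)h_\top(f)$. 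Both arguments use asymptotic $h$-expansivity for essentially the same purpose (controlling entropy generated at arbitrarily small scales), and both ultimately hinge on the same numerical bound $(1-\eta)h_\top(f)<h_\top(f)$; your version is shorter and avoids the combinatorial bookkeeping (where the paper's write-up is in fact somewhat rough), at the cost of being non-constructive, whereas the paper's argument exhibits $h$ and $r$ explicitly. You correctly identified the one delicate point, namely using closed neighborhoods and the $\limsup$ inequality rather than the open-set direction of Portmanteau; the remaining steps (invariant atoms lie on periodic orbits, countable combinations of periodic measures have zero entropy) are indeed classical.
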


\begin{proof}
Let $0<\eta<1$. Pick $0<\eps<\eta h_\top(f)/4$. As $f$ is asymptotically $h$-expansive we know
there exists a constant $s_0>0$ such that, for any $s>0$, any subset $Y$, and any $n\geq0$ the following holds
 $$
    r(s,n,Y) \leq C(s,s_0) e^{\eps n} r(s_0,n,Y).
 $$
Also $r(s_0/3,n,M)\leq C_0 e^{(h_\top(f)+\eps)n}$ for some $C_0<\infty$ and all
$n\geq0$.

Observe that, for any $s>0$, any integer $n\geq0$ and a decomposition $n=n_1+\dots+n_k$ into a sum of positive integers
we have 
$$
   r(3s,n,Y) \leq \prod_{i=1}^{k} r(s,n_i,f^{n_1+\dots+n_{i-1}}Y).
 $$ 
To see this, consider the map $\iota:x\mapsto (y_1,\dots,y_{k})$ where  
 $$
    f^{n_1+\dots+n_{i-1}}(x)\in B_f(y_i,s,n_i)
 $$ 
with the $y_i$'s taken from a minimal set of centers of $(s,n)$-balls making a cover of $f^{n_1+\dots+n_{i-1}}Y$. Take a minimal set $C_1$ such that the $\{B_f(x,s,n)\}_{x\in C_1}$ is a cover of $Y$. Select a minimal subset $C_2\subset C_1$ such that $\iota:C_2\to\iota(C_1)$ is a bijection. Clearly the cardinality of $C_2$ satisfies the above bound. We claim that $\{B_f(x,3s,n)\}_{x\in C_2}$ is a cover of $Y$. This follows from the fact that $\iota(x')=\iota(x)$ implies $B_f(x',s,n)\subset B_f(x,3s,n)$.

Now let $n_0<\infty$ satisfy 
$$\frac{\log N+\log C_0}{n_0}<\eps\textrm{ and }\binom{2[n/n_0]+2}{n}\leq e^{\eps n/2}$$
for all $n\geq 0$. Let $r>0$ be such that $B(x,r)\subset B_f(x,r_0/2,n_0)$.

Fix $N$ points $x_1,\dots,x_N$ and $B_r:=B(x_1,r)\cup\dots\cup B(x_N,r)$. Let $\mu$ be an invariant ergodic measure of $f$ with $\mu(B_r)>\eta$.  

We now bound the entropy of $\mu$ by estimating the number of $(s_0,n)$-balls necessary to cover some set $M'$ of measure more than $1/2$.
Observe that for a typical $x$ and $n$ large enough, we can decompose the integer interval
$[0,n[$ into subintervals, half of them being of the form $[a,a+n_0[$ with $f^ax\in
B(x_i,r)$ and the sum of their lengths at least $\eta n$. Therefore, we have
 $$
    r(s_0,n,M') \leq \sum_{n_1+\dots+n_k+kn_0=n} \prod_{i=1}^k r(s_0/3,n_i,M) r(s_0/3,n_0,B_r)
 $$
The previous estimates and the Birkhoff ergodic theorem yield a subset $M'$ of $M$ with $\mu(M')>1/2$ such that,
for all large $n$:
 $$\begin{array}{llll}
     \frac1n\log r(s_0,n,M') \leq &\eta \log N/n_0 + (1-\eta) (h_\top(f)+\eps) + \\
     & \log C_0/n_0
    + \log \binom{2[n/n_0]+1}{n}/n.
    \end{array}
 $$ 
It follows that
 $$\begin{aligned}
    h(f,\mu) &\leq h(f,\mu,s_0)+\eps \\
    & \leq (\eta \log N+\log C_0)/n_0  +  (1-\eta) (h_\top(f)+\eps) +2\eps \\
     & \leq h:=h_\top(f)+3\eps -\eta h_\top(f)
    < h_\top(f).
 \end{aligned}$$
\end{proof}

\section{Proof of Theorem~\ref{thm:main}}\label{sec:factor}

Let $N\geq1$ be an integer and let $f$ be an Anosov diffeomorphism of a compact manifold $M$.

\subsection{Choice of the numbers $\alpha,\eps>0$ and $\Lambda>1$}

We endow $M$ with an adapted Riemannian metric. Let $\eps_*,K_*$ be the two numbers as in the shadowing Lemma (Lemma \ref{l.shadowing}).  Let $\lambda>1$ be the hyperbolicity strength of $f$.

We fix $\Lambda\in (1,\lambda)$ and pick $\alpha>0$ and $0<\eps_0<\eps_*/2$ small enough so that $\Lambda>\frac{\epsilon_0^{1/2} +2\epsilon_0}{\epsilon_0^{1/2}-2\epsilon_0}>1$ and, for all $\tilde f\in\Diff^1(M)$ with $d_{C^1}(\tilde f,f)<\eps_0$, for all $x\in M$:
 $$\begin{aligned}
    &\forall v\in C^u_{\alpha}(x)\qquad \| D\tilde fv\|\geq\Lambda\|v\|  \text{ and }D\tilde f v\in C^u_\alpha(\tilde fx) \\
    &\forall v\in C^s_{\alpha}(x)\qquad \| D\tilde fv\|\leq\Lambda^{-1}\|v\|  \text{ and }D\tilde f^{-1} v\in C^s_\alpha(\tilde f^{-1}x)
 \end{aligned}$$
where $C^u_\alpha,C^s_\alpha$ are the cone fields with aperture $\alpha$ around the unstable and stable bundles of $f$. We also fix $R_0>0$ such that, for all $x\in M$, for all $y\in (x+C^u_{\alpha}(x))\cap B(x,R_0)$
 \begin{equation}\label{eq:exp-Anosov1}
    \| \tilde fy-\tilde fx\|\geq\Lambda\|y-x\| 
              \text{ and }\tilde f y-\tilde f x\in C^u_\alpha(\tilde f x)\\
 \end{equation}
and, likewise, if $y\in (x+C^s_{\alpha}(x))\cap B(x,R_0)$
 \begin{equation}\label{eq:exp-Anosov2}
      \| \tilde f^{-1}y-\tilde f^{-1}x\|\geq\Lambda\|y-x\| 
              \text{ and }\tilde f^{-1}y-\tilde f^{-1} x\in C^s_\alpha(\tilde f^{-1}x).
 \end{equation}

Observe that by compactness of $M$ and transversality of the cone fields, there are constants $\tau_1,\tau_2>0$ and $K<\infty$ such that, any pair of continuous foliations $\FF^1,\FF^2$ tangent to $C^u_\alpha,C^s_\alpha$ have a product structure with these constants. 

We fix $\eta>0$ small enough so that $\Lambda^{1-\eta}e^{-\eta}>1$. $\eta>0$ and $N\geq1$ being fixed, Lemma \ref{lem:deconcentrate-f} yields two numbers $h_0<h_\top(f)$ and $r_0>0$ such that for any $\mu\in\Prob_\erg^{h_0}(f)$, $\mu(B_{r_0})<\eta$. We fix $\gamma>0$ so small that $h_1:=h_0+d\gamma<h_\top(f)$.

We reduce $\eps_0$ so that $\eps_0>0$ and $\eps_0$ is less than the following:
\begin{itemize}
 \item $\eps_1(f,\Lambda,\alpha)$,
 \item $\eps_2(f,\Lambda,\alpha,\tau_1,\tau_2,K)$,
 \item $\tau_2/K_*$,
 \item $r_0/(1+2K_*+KK_*)$, and
 \item $R_0/KK_*,1/(KK_*+2)^2.$
 \end{itemize} 
where $\eps_1,\eps_2$ have been defined in Theorem \ref{thm:foliations} and in Proposition \ref{prop:flat}. 

\subsection{Entropy decrease under $\pi$}
We show that measures with large entropy for $g$ project to measures with large entropy for $f$.

As $0<\eps<\eps_1$, Theorem~\ref{thm:foliations} yields $g$-invariant center-unstable and center-stable foliations $\FF^{cu},\FF^{cs}$ with $C^1$ leaves. Recall that the shadowing Lemma defines a factor map $\pi:M\rightarrow M$ with 
$$\diam(\pi^{-1}(\pi(x)))\leq K_*d_{C^0}(g,f)<K_*\eps < \tau_2.$$ 

Let $\nu\in\Prob_\erg^{h_1}(g)$. As $0<\eps<\eps_2$, Proposition~\ref{prop:flat}  gives a set $X\subset M$ with $\nu(X)=1$ and $\sigma=cu$ or $cs$, such that $\pi^{-1}(\pi(x))\cap X\subset\FF^\sigma_{\tau_2}(x)$ for $\nu$-a.e. $x\in M$. We assume that $\sigma=cu$ and leave the similar case $\sigma=cs$ to the reader.  

 Let $\mu:=\pi_*(\nu)\in\Prob_\erg(f)$.  Recall that inverting a transformation does not change its measure-theoretic entropy so we have the following (easy extension of the) Ledrappier-Walters~\cite{LW} inequality:
 $$
  h(g^{-1},\nu) \leq h(f^{-1},\mu) + \int_M h_\top(g^{-1},\pi^{-1}(\pi(x))\cap\FF^{cu}(x)) \, \nu(dx).
 $$
   The dilation under $g^{-1}$ of the center-unstable
leaves is bounded by $e^{\gamma}$, so $h_\top(g^{-1},\FF^{cu}_\delta(x))\leq \dim \FF^{cu}\cdot \gamma$.
 It follows that
 \begin{equation}\label{eq:loss-entropy}
  h(f,\mu) \geq h(g,\nu)-d\gamma > h_0.
 \end{equation}

\subsection{Entropy-conjugacy}

We let $0<\eps<\eps_0$ and pick $g\in\Diff^1(M)$ satisfying (H1)-(H3) from Theorem~\ref{thm:main}. Let
 $$
    M':=\{x\in M:\pi^{-1}(\pi(x))=\{x\} \}\text{ and }M'':=\pi(M').
 $$
These are measurable subsets. We show that $M'$ and $M''$,  have full measure with respect to any measure in $\Prob_\erg^{h_1}(g)$ and $\Prob_\erg^{h_0}(g)$ respectively, with $h_0,h_1<h_\top(f)\leq h_\top(g)$ defined above.

First, consider $\nu\in\Prob_\erg^{h_1}(g)$.
From \eqref{eq:loss-entropy}, Proposition \ref{lem:deconcentrate-f} yields $\pi_*(\nu)(B_{r_0})<\eta$. But 
 $$
     \pi^{-1}(B_{r_0}) \supset \bigcup_{i=1}^N B(x_i,r_0-2K_*\eps_0)\supset B_{r+KK_*\eps}.
 $$
Indeed $r+KK_*\eps\leq \eps+KK_*\eps<r_0-2K_*\eps$. It follows that
$$\nu(B_{r+KK_*\eps})\leq \nu(\pi^{-1}(V_{r_0}))=\mu(V_{r_0})<\eta.$$

Let $x$ be a $\nu$-typical point and let $y\in \pi^{-1}(\pi(x))$. Note that $d(x,y)<K_*\eps<\tau_1$, hence the following points are well-defined: $y^s:=\FF_{\tau_2}^{cs}(x)\cap\FF_{\tau_2}^{cu}(y)$ and $y^u:=\FF_{\tau_2}^{cu}(x)\cap\FF^{cs}_{\tau_2}(y)$. The transversality of $\FF^{cu}$ and $\FF^{cs}$ implies that $d(x,y^s)\leq K d(x,y)\leq KK_*\eps$ and, likewise, $d(x,y^u)\leq KK_*\eps$.

As $g$ is $\gamma$-nearly hyperbolic and $y^u\in\FF^{cu}_{\tau_2}(x)$ we have
 $$
    d(g(x),g(y^u))\geq e^{-\gamma} d(x,y^u).
 $$
As $g$ respects the domination of $f$ and $KK_*\eps<\rho:=(\eps^{1/2}-2\eps)$ we have $g(y^u)-g(x)\in C^u_\alpha(g(x))$. 

Consider now the special case where $x\notin B_{r+KK_*\eps}$. Then $d(x,y^u)<KK_*\eps<R_0$ and $y^u-x\in C^u_\alpha(x)$. As $x,y^u\notin B_r$ and $d(x,y^u)<R_0$, we can use the estimates \eqref{eq:exp-Anosov1} and \eqref{eq:exp-Anosov2} and obtain the better lower bound
 $$
   d(g(x),g(y^u))\geq \Lambda d(x,y^u).
 $$

Define $m:M\to\RR$ by $m(x)=\Lambda^{-1}$ if $x\notin B_{r+KK_*\eps}$ and $m(x)=e^{-\gamma}$ otherwise. An induction yields:
 $$
    \forall n\geq0\quad d(g^n(y^u),g^n(x)) \geq \prod_{k=0}^{n-1} m(g^k(x)) d(x,y^u)
 $$
and Birkhoff Ergodic Theorem implies, for $\nu$-a.e. $x\in M$ that
 $$
      \lim_{n\to\infty} (1/n)\sum_{k=0}^{n-1} \log m(g^kx) \geq (1-\eta)\log\Lambda-\eta\gamma > 0.
 $$
As $d(g^n(y^u),g^n(x))\leq KK_*\eps$ for all $n$, we must have $x=y^u$. Likewise $x=y^s$. Thus, $x=y$ a.e. and $\nu(M')=1$.

Let $\mu\in\Prob_\erg^{h_0}(f)$. Proposition \ref{lem:deconcentrate-f} directly shows $\mu(V_{r_0})<\eta$. By compactness, there exists $\nu\in\Prob(g)$ with $\pi_*(\nu)=\mu$ and we can conclude as above that, for $\mu$-a.e. $x$, $\pi^{-1}(x)$ is a single point: $\mu(M'')=1$.

\subsection{Almost expansivity}

In the  previous section, we showed that $\pi(y)=\pi(x)$ implies $y=x$ for $\nu$-a.e. $x\in M$ and all $y\in M$ whenever $\nu\in\Prob_\erg^{h_1}(g)$. The hypothesis $\pi(x)=\pi(y)$ was only used to show that $\sup_{n\in\ZZ} d(g^nx,g^ny)< K_*\eps$. Hence, the above reasoning implies that $K_*\eps>0$ is an expansivity constant with respect to all large entropy measures of $g$. This finishes the proof of Theorem \ref{thm:main}.

\section{Proof of Theorem \ref{thm:applyToBV}}\label{s.BV}

In this section we prove that there is a $C^1$-entropically stable, $C^1$-robustly transitive diffeomorphism $g$ of the 4-torus which is not partially hyperbolic. More precisely, we check that the Bonatti-Viana example of a non-partially hyperbolic, robustly transitive diffeomorphism satisfies the assumptions of Theorem \ref{thm:main}.  Then, we show that the construction can be modified to obtain arbitrarily large symbolic extension entropy $h_{\operatorname{sex}}(g)$ as stated at the end of Theorem \ref{thm:applyToBV}.

Let $A$ be a $4$ by $4$ matrix with integer entries and determinant one with four distinct real eigenvalues where
 $$
      0<\lambda_1< \lambda_2 < 1/3<3<\lambda_3< \lambda_4
$$
and that the induced hyperbolic toral automorphism, $f_A$, on the 4-torus has at least 4 fixed points, say $p,q,r,s$. 
Following Bonatti and Viana \cite{BV},  one of the fixed points, say $s$, will be left alone to ensure the robust transitivity. A deformation will be done around two others, say $p$, respectively $q$, to forbid hyperbolicity and the existence of any invariant subbundle of the central-stable, respectively central-unstable, subbundle.
The last point $r$ will be used to obtain diffeomorphism with no symbolic extension using techniques from \cite{BD}. We must check that this construction can be performed under the assumptions of Theorem \ref{thm:main} for $N=3$ and $\alpha,\gamma,\eps$ small enough, i.e., smaller than $t(f,N)$ and $\Lambda>\frac{\eps^{1/2}+2\eps}{\eps^{1/2}-2\eps}$.

\begin{figure}[htb]
\begin{center}
\psfrag{A}{$f_A$}
\psfrag{B}{$$}
\psfrag{q}{$q$}
\psfrag{r}{$q_1$}
\psfrag{s}{$q$}
\psfrag{t}{$q_2$}
\psfrag{u}{$q_1$}
\psfrag{v}{$q$}
\psfrag{w}{$q_2$}
\psfrag{C}{$f_0$}
\includegraphics[width=5in]{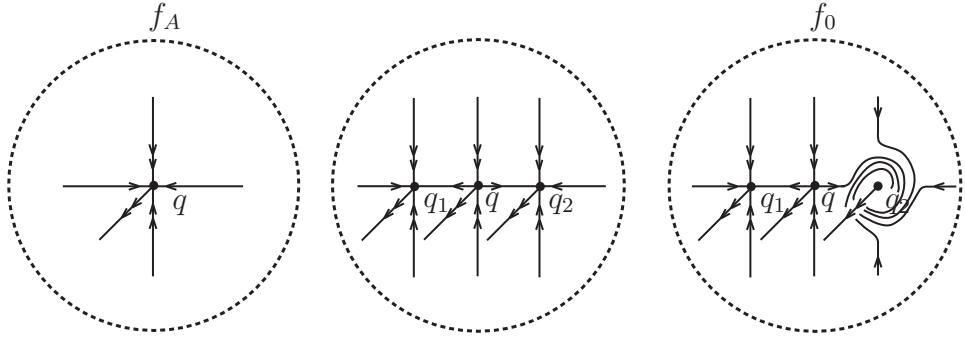}
\caption{Bonatti Viana construction}\label{f.bv}
\end{center}
\end{figure}

We begin by deforming $f_A$ around the two fixed points $p,q$.  Fix $\eta>0$ so small that $\lambda_3-\eta>1$. Let $\gamma\in(0,t)$ small enough so that
 $$
   (\lambda_3-\eta)/e^\gamma>1.
 $$
 
We fix $\epsilon\in (0,t)$ such that the balls of radius $2\epsilon$ around $p,q,r,s$ are disjoint and
$$1<\frac{\epsilon^{1/2} +2\epsilon}{\epsilon^{1/2} -2\epsilon}<\frac{\lambda_3-\eta}{e^{\gamma}}.$$  

We deform $f_A$ into $f_0$ inside $B(q,\epsilon/2)$ 
keeping $\mathcal F_A^u$ invariant. We do this in two steps. 
In the first step, we do a pitchfork bifurcation around $q$ in the stable direction $\lambda_2$. 
The stable index of $q$ changes from 2 to 1 and two new fixed points $q_1$ and $q_2$ are created.  Then we perturb the diffeomorphism in a neighborhood of $q_2$ so that the contracting eigenvalues become complex; see Figure~\ref{f.bv}.

To be more precise, let $D^2$ be the two dimensional disk and $\phi:D^2\times D^2\rightarrow \mathbb{T}^4$ be a linear chart mapping
\begin{itemize}
\item  0 to $q$, 
\item disks $D^2\times\{y\}$, $D_xg_{s_1}^{-1}(C^{cs}_\beta)\subset C^{cs}_\beta$ into the stable leaves of $f_A$, and 
\item disks $\{x\}\times D^2$, into the unstable leaves of $f_A$.
\end{itemize}
Let $\chi:D^2\rightarrow [0,1]$ be a smooth cutoff function: so $\chi(0)=1$ and $\chi$ is 0 in a neighborhood of the boundary of $D^2$.

Let $\Psi$ be a volume preserving vector field on $D^2$ such that $\Psi$ has a saddle singularity at the origin with one axis, $e_2$, being expanding and the other, $e_1$, contracting,and $\Psi$  is zero in a neighborhood of the boundary of $D^2$.  Let 
 $
    \tilde{\Psi}(x,y)=(\chi(y)\Psi(x), 0).
 $
We denote by $\phi_*\tilde{\Psi}$ the push-forward and by $(\Psi)_a$ the time $a$ of the flow defined by a vector field $\Psi$. Let
 $$
    f_{A,a}=(\phi_*\tilde{\Psi})_a\circ f_A
$$
Observe that the point $q$ remains fixed for all $a\geq 0$ and that the weakest contracting eigenvalue, $\lambda_2=\lambda_2(q,a)$, of $Df_{A,a}(q)$ increases as $a$ increases. It is easy to arrange it so that the expansion at other points is not stronger than that at $q$.  So there exists some $a_0>0$ such that the eigenvalue in the direction $e_2$ is 1 for $Df_{A,a_0}(q)$.  For $a>a_0$ we have expansion in this $e_2$ direction.  Fix  $a_1$ larger, but sufficiently close, to $a_0$ such that $\lambda_2(q,a_1)\leq e^{\gamma/2}$.  Note that $f_{A,a_1}$ is $\gamma/2$-nearly hyperbolic.

We let $g_0=f_{A,a_1}$ and perturb $g_0$ in a neighborhood of $q_2$ that is disjoint from $q$, using a similar, smaller chart.  Let $\Phi$ be a volume preserving vector field of $D^2$ that is zero in a neighborhood of the boundary of $D^2$ and defines a fixed point of center type at the origin.  Let
 $
   \tilde{\Phi}(x,y)=(\chi(y)\Phi(x), 0)
 $
and
 $$
   g_b=(\phi_*\tilde{\Phi})_b\circ g_0.
 $$ 
For some $b_0>0$, the two contracting eigenvalues of $q_2$ for $Dg_{b_0}(q_2)$ become equal.  For $b_1$ slightly larger, these eigenvalues are (non-real) complex conjugates.

Note also that the creation of fixed points with different indices prevents
the topologically transitive map from being Anosov.
These non-real eigenvalues also forbid the existence of a one-dimensional invariant sub-bundle inside $E^{cs}$.

The differential of $g_{b_1}$ at each point of $M$ has the following form, using block matrices, in the eigenbasis $(v_1,v_2,v_3,v_4)$ (which we can and do assume to be orthonormal)
 $$
    \left(\begin{matrix}
            \Lambda_{cs} & K \\
            0            & \Lambda_u
            \end{matrix}\right) \text{ where }
    \Lambda_u = \left(\begin{matrix}
            \lambda_3 & 0 \\
            0         & \lambda_4
            \end{matrix}\right)
 $$
and $\Lambda_{cs}$ and $K$ are (variable) $2$-by-$2$ matrices with 
 $$
\|\Lambda_{cs}\|:=\sup_{x\in M}\sup_{\|v\|=1} \|\Lambda_{cs}(v) \|\leq e^{\gamma/2}.
 $$

The stable foliation for $f_A$ is invariant under $g_{b_1}$, even though its tangent vectors are not necessarily contracted under $Dg_{b_1}$. Thus, any thin cone field $C^s_\alpha$ will be invariant under $g_{b_1}^{-1}$.
More specifically, the inverse of the above matrix is
$$\left(\begin{matrix}
            \Lambda_{cs}^{-1} & -\Lambda_{cs}^{-1}K\Lambda_u^{-1} \\
            0            & \Lambda_u^{-1}
            \end{matrix}\right).
$$
 
On the one hand, fixing $\alpha\in (0,t)$ small enough so that 
\begin{equation}\label{eq:invcon-cs1}
    \alpha < \frac{\lambda_3-e^{\gamma/2}}{\|K\|}
\end{equation}
ensures the invariance $D_xg_{s_1}^{-1}(C^{s}_\alpha)\subset C^{cs}_\alpha$
and such that for all  non-zero vectors $v\in C^s_{\alpha}$ and $w\in C^u_{\alpha}$, 
$$\|Df v\| <  e^\gamma\|v\| \text{ and } \|Dfw\|> (\lambda_3-\eta)\|w\|.
$$
Recall that the cones are defined using the invariant splitting of the original map $f_A$.

On the other hand, the vectors in the unstable subbundle for $f_A$ are still expanded by $Dg_{b_1}$, but the subbundle is no longer invariant.  
  Let 
$C$ be the complement of the center-stable conefield, i.e., $C(x):=\overline{T_xM\setminus C^s_\alpha(x)}$.  Then $C$ is an invariant strong-unstable cone field for $g_{s_1}$, but usually very wide.  To rectify this, we modify $g_{b_1}$ in $B(q, \epsilon)$.
One defines $f_1$ around $p$ and $q$ by 
$$f_1:=L\circ g_{s_1}\circ L^{-1}$$ 
 where 
 $$
    L = \left(\begin{matrix}
            \alpha^2 & 0  & 0 &0\\
            0         & \alpha^2 & 0 & 0\\
            0 & 0 & 1 & 0 \\
            0 & 0 & 0 &1
            \end{matrix}\right)
 $$
We set $f_1=g_{s_1}$ elsewhere. This yields a diffeomorphism since $f_A=L\circ f_A\circ L^{-1}$. $C^{u}_{\alpha}$ is mapped to $C$ by $L^{-1}$, so is an invariant cone field for $f_1$.  Also $L^{-1}(C^s_\alpha)\subset C^s_\alpha$. So $f_1$ preserves the two cone fields $C^s_\alpha$ and $C^u_\alpha$ and is $\gamma$-nearly hyperbolic.
 
We explain why $f_1$ ($\alpha, \rho,\Lambda)$-respects the domination for $f_A$ where $\rho=(\epsilon^{1/2}-2\epsilon)$ and $\Lambda=(\lambda_3-\eta )/e^\gamma$.
 
 The cones $C^u_\alpha(x)$ and $C^s_\alpha(x)$ are constant. Hence,  for all $x\in M$ and $y,z\in B(x, \rho)$,  if $y-x\in C^u_\alpha(x)$ and $f_1(z)-f_1(x)\in C^s_{\alpha}(f_1(x))$, then $f_1(y)-f_1(x)\in C^u_{\alpha}(f_1(x))$ and $z-x\in C^s_{\alpha}(x)$. Moreover,
 $$\frac{\|f_1(y)-f_1(x)\|}{\|f_1(z)-f_1(x)\|}\geq \frac{(\lambda_3-\eta)(\|y-x\|}{e^\gamma\|z-x\|}.$$
 Hence,
 $$\frac{\|f_1(y)-f_1(x)\|}{\|y-x\|}>\frac{\lambda_3-\eta}{e^\gamma}\frac{\|f_1(z)-f_1(x)\|}{\|z-x\|}=
 \Lambda\frac{\|f_1(z)-f_1(x)\|}{\|z-x\|}.$$
 Hence, the map $f_1$ ($\alpha, \rho,\Lambda)$-respects the domination for $f_A$. $f_1$ is clearly an $(\eps,3)$-sparse deformation of $f_A$ and  we noticed that it is $\gamma$-nearly hyperbolic.

To finish the construction we repeat the deformation just made on $f_A$ near $p$, on $f_1^{-1}$ in the neighborhood of radius $\epsilon$ around $q$. We get a map $f$ which is robustly transitive,  not partially hyperbolic, and has a dominated splitting $T\mathbb T^4=E^{cs}\oplus E^{cu}$ with 
$\dim E^{cs}=\dim E^{cu}=2$ (see~\cite{BV} for proofs of these facts).  Furthermore, by construction the map $f$ satisfies the hypothesis of Theorem~\ref{thm:main} and so is entropy conjugate to $f_A$ and $C^1$-entropically stable, proving the first half of Theorem~\ref{thm:applyToBV}.

\section{Symbolic Extensions}\label{s.symbext}

We modify the diffeomorphism $f$ constructed in Sec.~\ref{s.BV} so that there is no symbolic extension, as stated in Theorem \ref{thm:applyToBV}.  The deformation will be done around the fourth fixed point, $r$, for the diffeomorphism $f$.  The construction will closely follow the methods in \cite{DN} and some of the discussion in \cite{DF}.  

\begin{figure}[htb]
\begin{center}
\psfrag{A}{$f$}
\psfrag{B}{$$}
\psfrag{q}{$r$}
\psfrag{s}{$r$}
\psfrag{t}{$r$}
\psfrag{C}{$g_0$}
\includegraphics[width=5in]{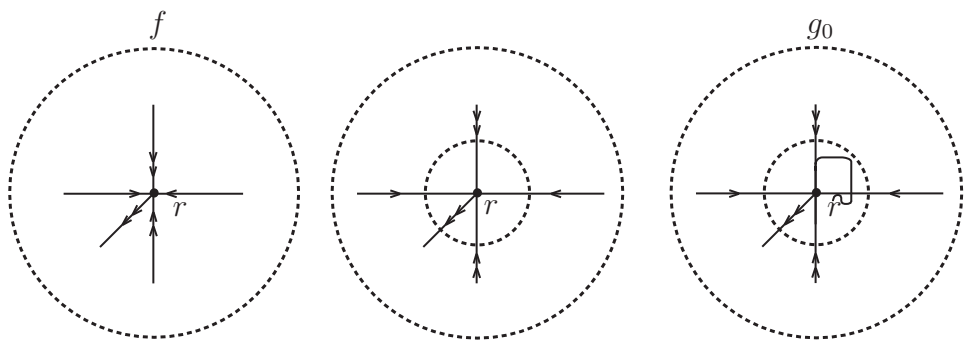}
\caption{Homoclinic tangency for $r$}\label{f.tangency}
\end{center}
\end{figure}

First we fix $\epsilon>0$ small enough for Theorem \ref{thm:main}  and modify $f$ in the ball of radius $\epsilon$ centered at $r$, along the center-stable direction, just as in the first step of the deformation about $p$.  However, 
we pick the parameter $a_1$ so that the differential becomes the identity along the center-stable direction at $r$. We further modify $f$ so that, not only the differential, but the map itself restricted to the center-stable leaf of $r$ is the identity  in a small ball $B(r,\tau)$. Now we perturb to obtain a new map $g_0$ such that $r$ is a saddle fixed point in the stable direction with directions that are slightly expanding and contracting (without violating the $\gamma$-near hyperbolicity) and such that $r$ has a homoclinic tangency inside $B(r, \tau)$ in the stable leaf.
  See Figure \ref{f.tangency}.  As in the previous arguments we can do this in such a way that the deformed map $g_0$ will satisfy the conditions of Theorem \ref{thm:main}.

\begin{figure}[htb]
\begin{center}
\psfrag{D}{$g_1$}
\psfrag{B}{$$}
\psfrag{q}{$r$}
\psfrag{s}{$r$}

\psfrag{v}{$r$}
\psfrag{C}{$g_0$}
\includegraphics[width=5in]{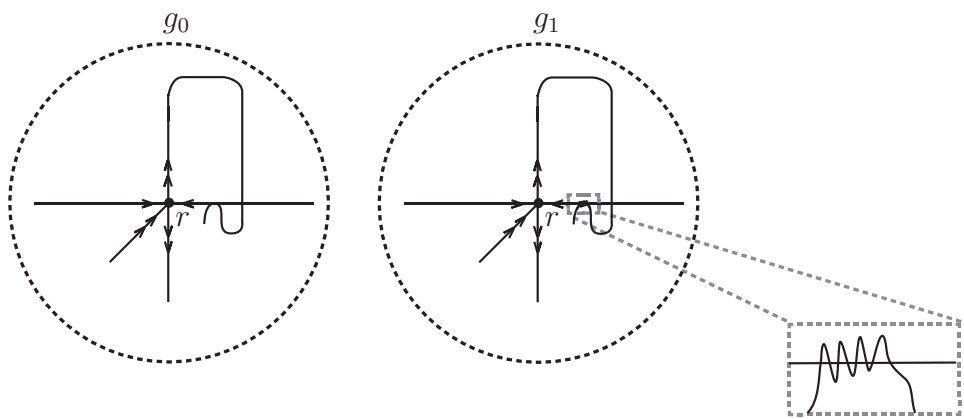}
\caption{Creation of a horseshoe}\label{f.tangency2}
\end{center}
\end{figure}

  We now perturb $g_0$ to obtain a map $g$ with a neighborhood $\mathcal{V}\subset \mathrm{Diff}^1(\mathbb{T}^4)$ and a $C^1$-residual set $\mathcal{D}\subset \mathcal{V}$ such that each $\tilde{g}\in \mathcal{D}$ has no symbolic extension.
 We first perturb in a  $C^1$ small, but $C^2$ large manner. 
The idea is to create a number of transverse intersections near the pervious homoclinic point.  See Figure~\ref{f.tangency2}.  From these transverse homoclinic points we obtain a locally maximal hyperbolic set with topological entropy larger than some constant.  This is now the situation examined in Downarowicz and Newhouse~\cite{DN} where they show the nonexistence of symbolic extensions.  The only difference is that we are working on a 2-dimensional leaf of a foliation whereas they are dealing with surfaces.  A detailed explanation of this procedure is given in~\cite{DF}.  So there exists an open set $\mathcal{V}$ in $\mathrm{Diff}^1(\mathbb{T}^4)$ such that each $g\in \mathcal{V}$ is robustly transitive, not partially hyperbolic and entropically conjugate to $f_A$.  Furthermore, there is a $C^1$-residual set $\mathcal{D}$ in $\mathcal{V}$ such that each diffeomorphism in $\mathcal{D}$ has no symbolic extension.  This concludes the proof of Theorem~\ref{thm:applyToBV}.

\end{document}